\newtheorem{thm}{Theorem}[section]
\newtheorem{lem}[thm]{Lemma}
\newtheorem{prop}[thm]{Proposition}
\newtheorem{example}[thm]{Example}
\newtheorem{question}[thm]{Question}
\theoremstyle{definition}
\numberwithin{equation}{section}
\theoremstyle{remark} \hsize=7.5truein \vsize=8.6truein
\newcommand{\R}{\ensuremath{\mathbb{R}}}
\title{Traffic Analysis in Random Delaunay Tessellations and Other Graphs}
\author{John D. Hobby \and Gabriel H. Tucci}
\begin{document}

\begin{abstract}
In this work we study the degree distribution, the maximum vertex and edge flow in non-uniform random Delaunay triangulations when geodesic routing is used. We also investigate the vertex and edge flow in Erd\"os-Renyi random graphs, geometric random graphs, expanders and random $k$--regular graphs. Moreover we show that adding a random matching to the original graph can considerably reduced the maximum vertex flow.
\end{abstract}

\maketitle

\section{Introduction and Motivation}

Random graphs constitute an important and active research area with numerous applications to geometry, percolation theory, information theory, queuing systems and communication networks, to mention a few.  They also provide analytical means to settle prototypical questions and conjectures that may be harder to resolve in specific circumstances. In this work we analyze the traffic congestion on random Delaunay triangulations with non-uniform density, Erd\"os--Renyi random graphs, random $k$--regular graphs and random geometric graphs. 

Given a set of points in Euclidean space, the Delaunay triangulation gives a canonical triangulation whose vertices are these points. These triangulations have many nice combinatorial and geometric properties that make them extremely useful. Moreover, they can also be constructed in Riemannian manifolds. However, they do not exist for arbitrary sets of points: certain density requirements are needed to ensure that the triangulation can accurately represent both the topology and geometry of the manifold. These triangulations are canonically determined by the set of points and 
have many of the properties that Delaunay triangulations have in the Euclidean space. 

The definition of Delaunay tessellations is the same in Riemannian geometry as it is in $\R^{n}$. They are defined as having the empty circumscribing sphere property: the minimal radius circumscribing sphere for any simplex contains no vertices of the tessellation in its interior. However, there are several possible problems with this definition. How do we know the circumscribing sphere is unique? For instance, a necessary requirement is that all simplices of our Delaunay tessellation and their neighbors are contained inside strongly convex balls. Otherwise, we would run into problems with the tessellation being well defined.  Since in this work we are not is interested in studying Delaunay triangulations per se but in understanding their traffic congestion and graph properties, we focus on the following construction. Let $M\subseteq\R^2$ be a either the open unit disk or the whole Euclidean plane with density $\rho(x,y)$, and let $\mathcal{P}$ be a Poisson point process of density $\rho$ on $M$. The distribution and density of these points are determined by the function $\rho$. Now let $\mathcal{T}$ be the Delaunay triangulation of $\mathcal{P}$ with respect to the Euclidean metric in $\R^2$. In this work we are mostly interested in the case where the density is rotationally symmetric (i.e. $\rho(x,y) = f(x^2+y^2)$) and where 
$$
\int_{M}{\rho(x,y)dxdy}=\infty.
$$
The last condition guarantees that the points in the set $\mathcal{P}$ are infinite with probability one. Our interest is in understanding how the graph's characteristics change as we change the density. For instance, let $\mathcal{T}$ be as before and choose $x_{0}$ an arbitrary vertex. Consider the sets 
$$
\mathcal{T}_{n}:=\{x\in \mathcal{T}\,:\,d(x_0,x)\leq n\}
$$
with the graph metric (hop metric). For each pair of nodes, consider a unit flow that travels through the minimum path between nodes 
so that the total flow in $\mathcal{T}_{n}$ is equal to $|\mathcal{T}_{n}|(|\mathcal{T}_n|-1)/2$.
If there is more than one minimum path for some pair of nodes $v,w$, the flow splits
among them in a locally equal manner:  If $v'$ is a vertex on a such a path, the flow
out of $v'$ is split equally among neighbors $\{x \,:\, d(v',x)=1$ and $d(x,w)=d(v',w)-1\}$,
where $d$ is the hop metric.

Given a node $v$ we define $T_{n}(v)$ as the total flow generated in $\mathcal{T}_{n}$ passing through $v$. In other words, $T_{n}(v)$ is the sum off all the geodesic paths in $\mathcal{T}_{n}$ which are carrying flow and contain the node $v$. Let $M_{v}$ be the maximum vertex flow
$$
M_{v}(n):=\max \Big\{T_{n}(v)\,:\, v\in \mathcal{T}_{n}\Big\}.
$$
It is easy to see that for any graph $|\mathcal{T}_{n}|-1\leq M_{n}\leq |\mathcal{T}_{n}|(|\mathcal{T}_n|-1)/2$. Analogously, we can define $M_{e}(n)$ as the maximum edge congestion. Note that the same question can be formulated by replacing the infinite graph $\mathcal{T}$ with a sequence of graphs $\{G_{n}\}_{n=1}^{\infty}$ such that $|G_{n}|\to\infty$.

One of our main motivations is to understand how $M_{v}$ is affected by changes in the function $\rho$. We also study how the density affects the degree distribution of the corresponding triangulation. Another topic is the maximum vertex flow in Erd\"os-Renyi random graphs, random geometric graphs and random $k$--regular graphs. 

It was observed in many complex networks, man-made or natural, that the typical distance between the nodes is surprisingly small. More formally, as a function of
the number of nodes $N$, the average distance between a node pair typically scales at or below $O(\log(N))$. Moreover, many of these complex networks, specially communication networks, have high congestion. More precisely, there exists a small number of nodes called the {\it core} where most of the traffic pass through. In this work, we present a possible solution to this problem. Given a graph $G=(V,E)$, consider the graph $H$ constructed by adding a random maximal matching to $G$. More precisely, we choose a pair of nodes at random from $G$ and add an edge between these two nodes if they are not already connected. Now, we remove these two nodes from the possible candidates and repeat this process until there is only one or no nodes remaining (depending on the parity of $|V|$). It is clear that the new graph $H$ has the same nodes as before, and it has at most $|V|/2$ extra edges. Moreover, we added at most an extra edge at every node.  We show that if the original graph satisfies certain hypotheses (exponential growth) then we can reduce the maximum vertex flow to $O(N\mathrm{poly}(\log(N)))$.

This paper is organized as follows. In Section \ref{Sconstruc}, we present the construction of the random Delaunay triangulations and we show a necessary condition to guarantee maximum vertex flow of the order $\Theta(N^2)$. We also show that for any planar graph, congestion cannot be smaller that $\Theta(N^{3/2})$ regardless of how the flow is routed. In Section \ref{analysis}, we analyze the maximum  flow and degree distribution for several examples. In Section \ref{rg}, we analyze maximum vertex and edge flow in random Erd\"os-Renyi random graphs, random geometric graphs and random $k$--regular graphs. We further present the main result on the congestion after a random matching. Finally, Section \ref{imp_alg} discusses some of the algorithms used, their complexity and how they were implemented.

\section{Construction}
\label{Sconstruc}
\par Consider $M=B(0,t)\subseteq \R^2$ the open ball of radius $t=1$ or $t=\infty$. Let $\rho$ be a rotationally symmetric density on $M$. Let $\mathcal{P}$ be a Poisson process with intensity $\rho$ on $M$, and let $\mathcal{T}$ be the Delaunay triangulation of the set of points in $\mathcal{P}$. In particular, introducing geodesic polar coordinates $r, \theta$ makes it apparent that for any Borel set $A\subseteq M$ the expected number of points in $A$ is
\begin{equation}
\int_{A}{\rho(r) r \,dr d\theta}.
\end{equation}
Therefore, if 
\begin{equation}\label{rho}
2\pi \int_{0}^{t}{\rho(r)r\,dr}=\infty
\end{equation}
then almost surely $\mathcal{T}$ induces and infinite graph (triangulation) on $M$.
\begin{example}\label{examp1}
If $t=1$ and 
$$
\rho(r) = \frac{4\lambda}{(1-r^2)^2},
$$
then $\rho$ corresponds to the constant density on the unit ball with the metric of the Hyperbolic space under the Poincar\'e disk representation with curvature $-\lambda^2$.
\end{example}

\begin{example}
If $t=\infty$ and 
$$
\rho(r) = 1
$$
then $\rho$ corresponds to the constant density on the Euclidean space.
\end{example}

\par For notational simplicity, we assume that $0\in\mathcal{P}$; otherwise we choose the closest point to the origin. Let $\{r_{n}\}_{n=1}^{\infty}$ be an increasing sequence of positive numbers such that $r_{n}<t$ and $\lim_{n\to\infty}{r_n}=t$. Consider $\mathcal{T}_{n}$ the graph induced by the finite set $\{x\in\mathcal{T}\,\,:\,\, d(0,x)\leq r_n\}$ where $d(x,y)$ for $x,y\in\mathcal{T}$ is the hop metric in the triangulation. In this Section, we are interested in analyzing the traffic behavior on the graph $\mathcal{T}_{n}$ as $n$ increases as we discussed in the introduction. For instance, if $G_N$ is $K_N$, the complete graph with $N$ vertices, then $M_{v}(N)=N-1$. It was proved, in \cite{Tucci1, Tucci2} that if $G$ is an infinite Gromov hyperbolic graph then $M_{v}(n)=\Theta(N^2)$.  (For more details in Gromov hyperbolic graphs, see \cite{Gromov}). In particular, trees have congestion of the order $N^2$.

\par One of our main questions is, what is the rate of growth of $M_v(n)$ and $M_e(n)$ as $n\to\infty$. In particular, under what conditions is
\begin{equation}
\lim_{n\to\infty}{\frac{M_v(n)}{N(n)^2}}>0?
\end{equation}

The next Theorem gives a sufficient condition, but first let us recall a result from Riemannian geometry. Given a rotationally symmetric Riemannian surface with metric $ds^2 = f(r)^2dr^2 + g(r)^2d\theta^2$, its curvature $K(r)$ is equal to
\begin{equation}\label{curvature}
K(r)=\frac{-1}{f(r)g(r)}\frac{\partial}{\partial r}\Bigg( \frac{g'(r)}{f(r)}\Bigg).
\end{equation}

\begin{thm} Assume $t$ is equal to $1$, and let $\rho$ be a differentiable, strictly increasing function that is strongly convex (i.e., $\ln\rho$ is convex) and satisfies condition (\ref{rho}). Then there exists $\alpha>0$ such that
\begin{equation}
\limsup_{n\to\infty}{\frac{M_v(n)}{N(n)^2}}=\alpha>0.
\end{equation}
\end{thm}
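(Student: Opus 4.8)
The plan is to exhibit a \emph{fixed, finite} set of vertices near the origin through which a positive fraction of all the geodesic flow is forced to pass, and then to extract from it a single vertex of quadratic flow by the pigeonhole principle. For each integer $k\ge 1$ let $\mathcal{B}_k:=\{x\in\mathcal{T}:d(0,x)\le k\}$ be the hop-ball of radius $k$ about the origin; since $\mathcal{P}$ is locally finite in the open disk, $\mathcal{B}_k$ is almost surely finite, and for all large $n$ it coincides (together with the induced hop-distances from $0$) with the corresponding ball inside $\mathcal{T}_n$. Suppose we can find constants $k$ and $\beta>0$ such that, for infinitely many $n$, \emph{every} hop-geodesic of each of at least $\beta\binom{N(n)}{2}$ pairs of vertices of $\mathcal{T}_n$ meets $\mathcal{B}_k$. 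Then each such pair contributes at least $1$ to $\sum_{v\in\mathcal{B}_k}T_n(v)$, so this sum is at least $\beta\binom{N(n)}{2}$, whence $M_v(n)\ge\max_{v\in\mathcal{B}_k}T_n(v)\ge \frac{\beta}{|\mathcal{B}_k|}\binom{N(n)}{2}$, and the theorem follows with $\alpha=\beta/(2|\mathcal{B}_k|)>0$ (a positive quantity depending only on the realization).

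To produce such a bottleneck I would first isolate the geometric input. Equip $M$ with the conformal Riemannian metric $ds^2=\rho(r)\,(dr^2+r^2d\theta^2)$, whose area element is exactly $\rho(r)\,r\,dr\,d\theta$; heuristically a point of $\mathcal{T}$ sits at Euclidean distance $\Theta(\rho^{-1/2})$ from its Delaunay neighbours, so the hop metric on $\mathcal{T}$ is comparable to the $ds$-distance. Applying the curvature formula (\ref{curvature}) with $f(r)=\sqrt{\rho(r)}$ and $g(r)=r\sqrt{\rho(r)}$ gives, after a short computation, $g'/f=1+\tfrac{r\rho'}{2\rho}$ and hence
\begin{equation}
K(r)=-\frac{\rho\rho'+r\big(\rho\rho''-(\rho')^2\big)}{2\,r\,\rho^3};
\end{equation}
since $\rho>0$ is strictly increasing ($\rho'>0$) and $\ln\rho$ is convex ($\rho\rho''-(\rho')^2\ge 0$), the numerator is strictly positive and $K(r)<0$ throughout $M$. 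Thus $(M,ds)$ is a negatively curved surface, and condition (\ref{rho}) says it has infinite area, while $\int_0^{s}\rho(r)r\,dr<\infty$ for every $s<1$ (as $\rho$ is bounded on $[0,s]$). Consequently all but finitely many vertices of $\mathcal{T}_n$ lie, for large $n$, in a thin outer Euclidean annulus $\{s\le|x|<1\}$, and rotational symmetry of $\mathcal{P}$ makes the angular coordinates of these vertices asymptotically uniform on the circle.

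Now fix an angular threshold $\phi_0>0$. By the previous paragraph a positive fraction of the vertex pairs of $\mathcal{T}_n$ are ``angularly separated'': both lie in the outer annulus with angular coordinates differing by at least $\phi_0$. For such a pair $p,q$ the Gromov product $(p|q)_0$ based at the origin is bounded by a constant depending only on $\phi_0$ — this is the familiar fact that in negative curvature the geodesic joining two points that are far from the centre and angularly separated must pass within a bounded distance of the centre. Transporting this through the comparison between the hop metric and $ds$, it follows that every hop-geodesic from $p$ to $q$ inside $\mathcal{T}_n$ meets $\mathcal{B}_k$ for a constant $k=k(\phi_0)$ that does \emph{not} depend on $n$. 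This supplies exactly the hypothesis of the first paragraph, with $\beta$ the limiting fraction of angularly separated pairs, completing the argument.

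I expect the real difficulty to lie entirely in making the comparison between the hop metric of $\mathcal{T}$ and the Riemannian metric $ds$ precise, and in upgrading ``$K<0$'' to the quantitative hyperbolicity (a uniform bound on the Gromov product, i.e.\ a $\delta$-thin-triangle estimate) that the third paragraph uses — particularly in the regime where $\rho$ blows up only moderately fast as $r\to 1$, where the local Poisson--Delaunay geometry is delicate because the density can vary appreciably over the length scale of a single edge. This is precisely where strict monotonicity and log-convexity of $\rho$, together with the specific choice of exhaustion $r_n\to 1$, should be used, and it is also the reason one obtains only a $\limsup$ rather than a genuine limit, since the ``outer annulus dominates'' and ``geodesics cross $\mathcal{B}_k$'' steps may be available only along a sequence of scales. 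If the comparison can be pushed all the way to a quasi-isometry with a Gromov hyperbolic space, an alternative is to invoke directly the result of \cite{Tucci1,Tucci2} that infinite Gromov hyperbolic graphs satisfy $M_v(n)=\Theta(N(n)^2)$.
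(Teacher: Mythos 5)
Your proposal follows essentially the same route as the paper's proof: turn the density into a rotationally symmetric Riemannian metric on the disk, use the curvature formula together with monotonicity and log-convexity of $\rho$ to get $K<0$, note that the Poisson points have unit intensity in this metric so hop-geodesics track Riemannian geodesics, and use the fact that negative curvature bends geodesics toward the origin to force a positive fraction of all pairs through a fixed neighborhood of the origin, yielding quadratic flow there. The only cosmetic differences are that the paper expresses the bending via shadows of a unit-volume ball at the origin and lower-bounds $T_n(0)$ directly instead of pigeonholing over a finite hop-ball, and it leaves the hop-metric versus Riemannian-metric comparison (quasi-geodesics, ``up to a fixed constant'') at the same heuristic level you explicitly flag.
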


\begin{proof}
Let $M=B(0,1)$ be the Riemannian surface with metric  $ds^2 = f(r)^2dr^2 + r^2f(r)^2d\theta^2$. Then its area form $dA$ is equal to $dA=rf(r)^2drd\theta$. Hence, the Euclidean unit ball $B(0,1)$ with volume density $\rho(r)=f(r)^2$ corresponds to this metric. By applying equation (\ref{curvature}) to $M$ we see that 
$$
K(r) = \frac{-1}{rf(r)^2}\Bigg( \frac{f'(r)}{f(r)} + r\frac{\partial^2}{\partial r^2}\ln f(r)\Bigg).
$$
Since $f$ is increasing and strongly convex, we observe that $K(r)<0$ for all $0\leq r<1$. Let $\mathcal{P}$ be a Poisson process in $B(0,1)$ with intensity $\rho$. This is equivalent to a Poisson process in $M$ of constant unit intensity. Let $\mathcal{T}$ be its Delaunay triangulation, and let $\{r_{n}\}_{n=1}^{\infty}$ be an increasing sequence converging to 1. Denote by $\mathcal{T}_{n}$ the restriction of $\mathcal{T}$ to $B(0,r_n)$, and let $N(n)=|\mathcal{T}_{n}|$.

As before, we assume that $0\in\mathcal{P}$ (otherwise we choose the closest point to the origin). For each $0\leq\delta<1$, the volume $\mathrm{vol}(B(0,\delta))<\infty$ and $\mathrm{vol}(B(0,\delta)^{c})=\infty$. Therefore, we have only a finite number of nodes in $B(0,\delta)$ and an infinite number outside this ball. Since we are interested in the asymptotic behavior of $M_{v}(n)$, we can ignore all the traffic flow generated inside $B(0,\delta)$ for $n$ sufficiently large. Given $x$ and $z$ in $B(0,\delta)^{c}$, let $\gamma_{x,z}$ be the geodesic in $M$ connecting these two points. Since $M$ has negative curvature, the geodesics bend over to the origin. In particular, let $\omega$ be the number such that $\mathrm{vol}(B(0,\omega))=1$ and let $C_{y}^{E}$ be the shadow of this ball seen through the point $y$ to the unit circle according to the Euclidean metric and let $C_{y}$ be the shadow of this point according to the metric on $M$ as shown in Figure \ref{shadow}. Since $M$ has negative curvature, it is clear that $C_{y}^{E}\subset C_{y}$. Moreover, the set $C_{y}$ depends only on the distance from $y$ to the origin. Let $v(r)=\mathrm{length}(C_{y})$ in the unit circle $\partial M=S^{1}$.

\begin{figure}[!Ht]
  \begin{center}
    \centerline{\includegraphics{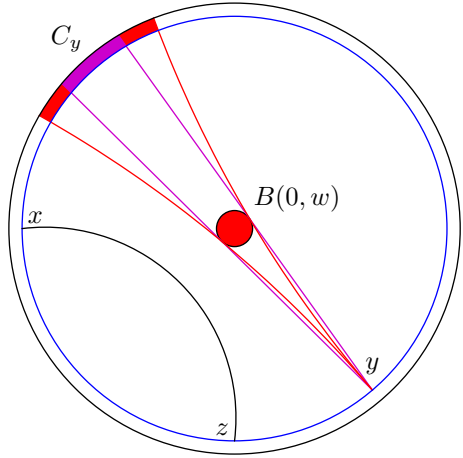}}
    \caption{Geodesic path and shadows in $M$. The shadow $C_{y}^{E}$ is in magenta and the shadow $C_{y}$ is in red.}
    \label{shadow}
  \end{center}
\end{figure}

By construction, the points in $\mathcal{P}$ are distributed in such a way that there is one point per unit area in $M$. Therefore, it is clear that the geodesics in $\mathcal{T}$ are quasi--geodesics in $M$. Then the traffic flow in $\mathcal{T}_{n}$ is equal to the traffic in $(B(0,r_n),ds)$ up to a constant. Hence, up to a fixed constant,
\begin{equation}
\limsup_{0<\delta\leq 1}\limsup_{n\to\infty}{\frac{M_{v}(n)}{N(n)^2}}\geq \limsup_{0<\delta\leq 1}\limsup_{n\to\infty}{\frac{T_{n}(0)}{N(n)^2}}\geq 2\pi\int_{\delta}^{1}{v(r)dr}>0
\end{equation}
proving our claim.
\end{proof}


As mentioned above, the rate of growth of $M_{v}(n)$ is between $\Theta(N)$ and $\Theta(N^2)$ for any graph with $N$ nodes. The next Theorem shows that this function can be narrowed down if the graph is planar. 

\begin{thm}\label{n3/2}
If $G$ is a planar graph, then 
\begin{equation}
\Theta(N^{3/2})\leq M_{v}\leq \Theta(N^2)
\end{equation}
where $N$ is the number of nodes in $G$. Moreover, the same result holds for any traffic routing (not necessarily geodesic routing).
\end{thm}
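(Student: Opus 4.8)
The plan is to establish the lower bound $M_v \geq \Theta(N^{3/2})$, since the upper bound $M_v \leq \Theta(N^2)$ is the universal bound already noted for any graph. The key tool is the planar separator theorem of Lipton and Tarjan: any planar graph on $N$ vertices has a vertex set $S$ with $|S| = O(\sqrt{N})$ whose removal disconnects $G$ into pieces each of size at most $2N/3$. First I would iterate the separator theorem to obtain a balanced vertex separator that splits $G$ into two parts $A$ and $B$ with $|A|, |B| \geq cN$ for some constant $c > 0$ and $|S| = O(\sqrt{N})$. The point is that every one of the $|A| \cdot |B| = \Theta(N^2)$ source–destination pairs with one endpoint in $A$ and the other in $B$ must route its unit of flow along a path that passes through $S$. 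By the pigeonhole principle, some vertex $v \in S$ carries at least $\Theta(N^2)/|S| = \Theta(N^2)/\Theta(\sqrt N) = \Theta(N^{3/2})$ units of flow, giving $M_v(v) \geq \Theta(N^{3/2})$.

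The key steps, in order: (i) state the Lipton–Tarjan separator theorem and, by the standard recursive argument, upgrade it to a genuinely balanced separator $(A, S, B)$ with $|A|, |B| = \Theta(N)$ and $|S| = O(\sqrt N)$; (ii) observe that for geodesic routing — or any routing scheme whatsoever — each of the $\Theta(N^2)$ pairs $(a,b) \in A \times B$ sends one unit of flow, and any $a$–$b$ path in $G$ must contain a vertex of $S$ because $S$ separates $A$ from $B$; (iii) for a pair whose flow is split among several shortest paths, each fractional piece still crosses $S$, so the total $S$-flow is still $\geq |A|\cdot|B|$; (iv) apply pigeonhole over the $O(\sqrt N)$ vertices of $S$ to extract one vertex carrying $\Omega(N^{3/2})$ flow. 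The claim that the result holds for arbitrary routing is immediate from this argument, since step (ii) never used any property of the routing other than that flow travels along paths in $G$.

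The main obstacle — really the only subtle point — is step (i): the raw separator theorem only guarantees pieces of size $\leq 2N/3$, not a clean two-sided split with both sides $\Theta(N)$. The standard fix is to recursively separate the largest piece until all pieces have size $\leq N/2$, accumulating separator vertices; since each application adds $O(\sqrt{\text{(piece size)}})$ vertices and piece sizes shrink geometrically, the total separator has size $O(\sqrt N)$. One then greedily bundles the resulting small pieces into two groups $A$ and $B$ each of size between $N/4$ and $3N/4$, which is always possible once every piece is at most $N/2$. A cosmetic caveat: if $G$ is disconnected or has a dominating vertex the constants degrade, but one may assume $G$ is connected with bounded-degree-independent structure, or simply absorb such trivialities into the $\Theta$-notation. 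The upper bound requires no work beyond citing $M_v \leq N(N-1)/2$ from the introduction.
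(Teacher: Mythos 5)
Your proposal is correct and follows essentially the same route as the paper: invoke the planar separator theorem to get a partition $(A,S,B)$ with $|S|=O(\sqrt N)$ and no $A$--$B$ edges, note that every unit of flow between $A$ and $B$ must traverse $S$ regardless of the routing scheme, and apply the pigeonhole principle to find a vertex of $S$ carrying $\Omega(N^{3/2})$ flow. The only difference is your recursive ``balancing'' step, which is unnecessary: in the standard three-way form of the separator theorem used by the paper, $|A|+|B|\geq N-O(\sqrt N)$ together with $|A|,|B|\leq 2N/3$ already forces $|A|,|B|\geq N/3-O(\sqrt N)=\Theta(N)$.
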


\begin{proof}
By the planar separator theorem (\cite{Alon}) in any $N$-vertex planar graph $G = (V,E)$, there exists a partition of the vertices of $G$ into three sets $A$, $S$ and $B$ such that each of $A$ and $B$ has at most $2N/3$ vertices, $S$ has $O(\sqrt{N})$ vertices, and there are no edges with one endpoint in $A$ and one endpoint in $B$. It is not required that $A$ or $B$ form connected subgraphs of $G$. The set $S$ is called the separator for this partition. Therefore, all the traffic between nodes in $A$ and $B$ has to pass through a node in $S$. Hence, by the pigeonhole principle there exists a node in $S$ with traffic at least $\Theta(N^{3/2})$.
\end{proof}

It was proved in \cite{load} that if the graph $G$ has exponential growth then 
\begin{equation}
\Theta(N^{2}/\log(N))\leq M_{v}\leq \Theta(N^2)
\end{equation}
independently of how the traffic is routed.

\subsection{Poisson Process Construction}
\label{SPconstruc}
In this subsection, we describe a way to construct a realization of the Poisson $\mathcal{P}$ in practice. Let 
\begin{equation}
\Phi: \R^2 \to \R^2
\end{equation}
be the measure preserving map given in polar coordinates by $\Phi(re^{i\theta})=\alpha(r)e^{i\theta}$ where $\alpha:[0,\infty)\to [0,t)$ is an increasing function that depends on the function $\rho$. This map is measure preserving if and only if for every $x>0$,
$$
\mathrm{vol_{\mathbb{R}^2}}(B(0,x))=\pi x^2 = 2\pi \int_{0}^{\alpha(x)}{\rho(y)y\,dy}=: 2\pi F(\alpha(x)).
$$
Therefore, $\alpha$ has to be equal to
\begin{equation}
\alpha(x)=F^{\langle-1\rangle}(x^2/2)
\end{equation} 
where $F^{\langle-1\rangle}$ is the inverse with respect to composition of the function $F$. Let $\mathcal{P}_{\mathbb{R}^2}$ be a Poisson point process on the Euclidean plane of constant intensity $1$. We can construct $\mathcal{P}$ as the set of points $\Phi(\mathcal{P}_{\mathbb{R}^2})$. Hence, everything boils down to constructing a Poisson process on the Euclidean space which can be done by partitioning the space into a set of disjoint annuli and uniformly distributing points in these annuli.

\section{Congestion Analysis and Degree Distribution}\label{analysis}

In this Section, we show some simulation results obtained for the maximum edge and vertex flow as well as the degree distribution for different densities $\rho$.

\subsection{Hyperbolic Plane with the Poincar\'e Disk Representation}
\label{Spoinc}
Assume that $\rho$ is as in Example~\ref{examp1}. Then it is an easy calculation to show that
$\int\rho(y)y\,dy=\frac{2\lambda}{1-y^2}$ and
$F(x)=\frac{2\lambda x^2}{1-x^2}$, hence
$$
\alpha(x)=\Bigg( \frac{x^2}{4\lambda+x^2} \Bigg)^{1/2}.
$$
In Figure \ref{FF1}, we show the Delaunay triangulation with 10,000 nodes of the previous surface for different values of the intensity $\lambda$. In table \ref{tab1} we show the diameter, average and maximum vertex and edge flow. We observe that as $\lambda$ increases, the maximum vertex and edge flows decrease but the averages increase. 

\begin{figure}[!Ht]
  \begin{center}
    \includegraphics[width=5cm]{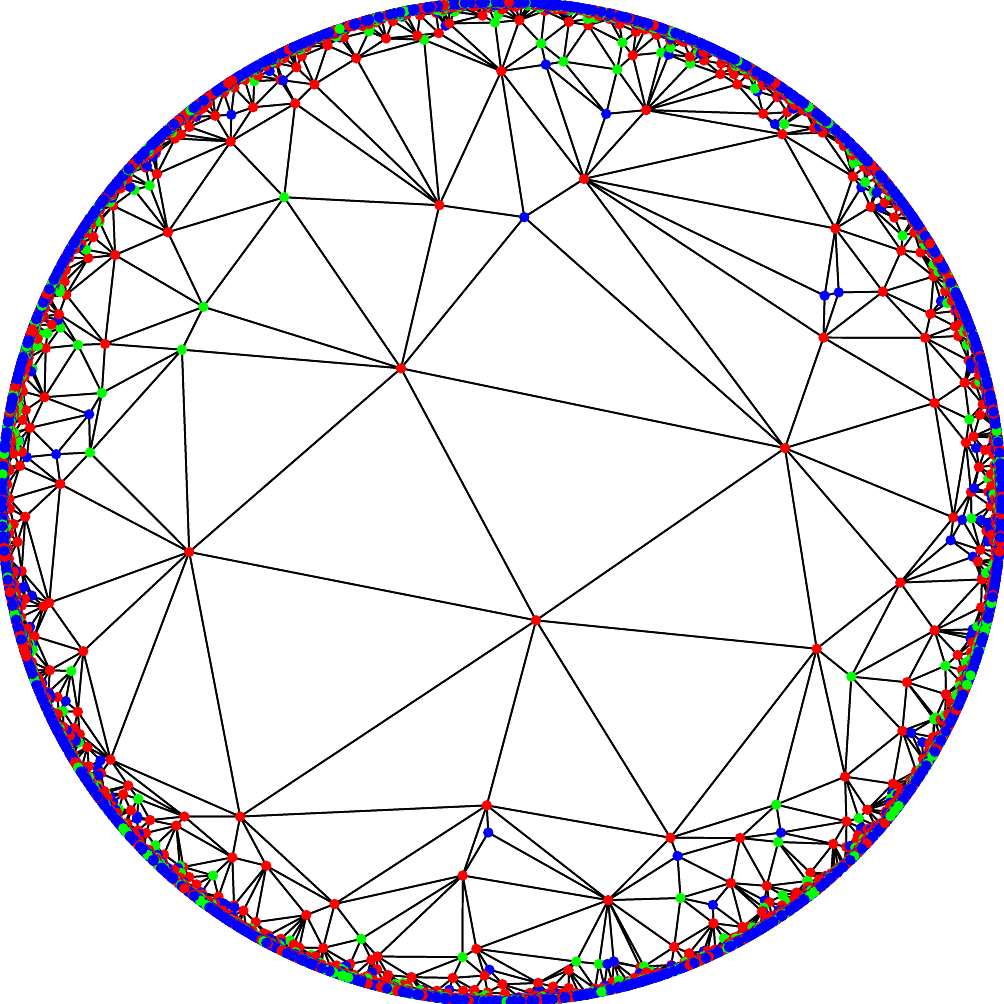}
    \includegraphics[width=5cm]{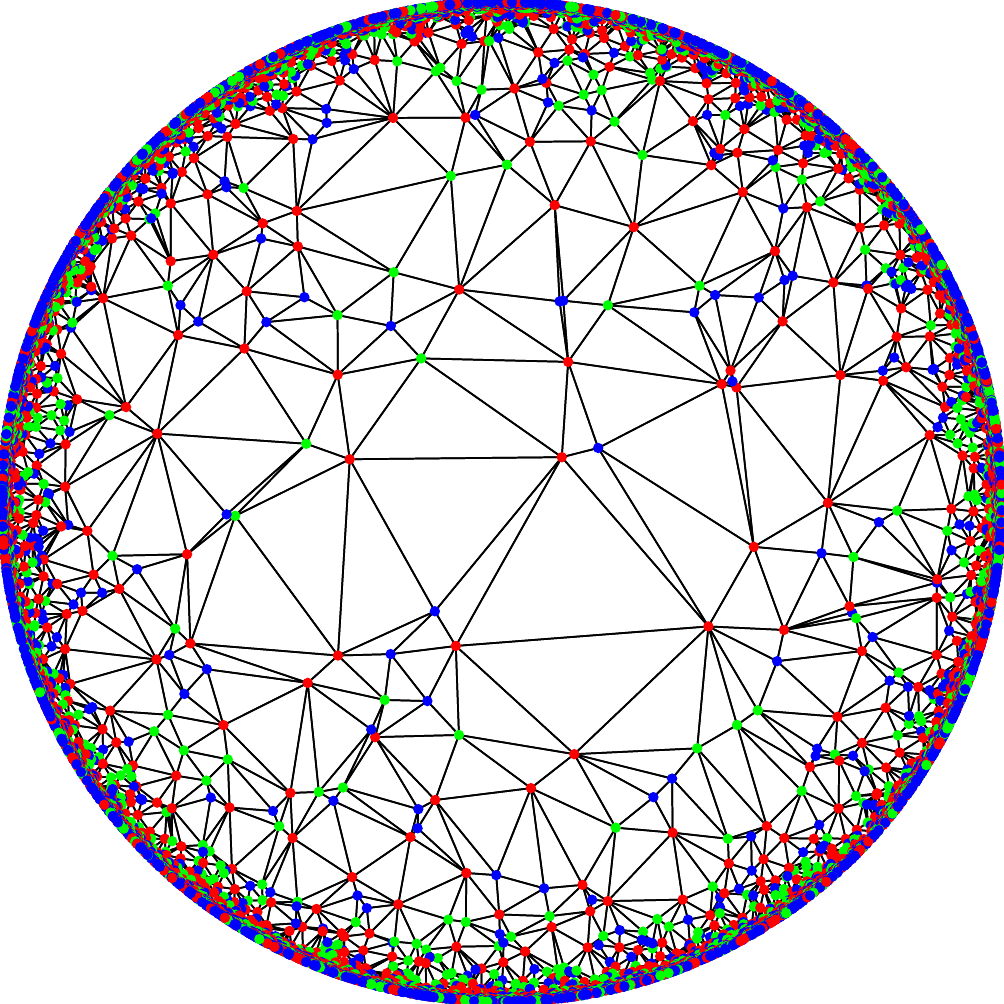}
    \includegraphics[width=5cm]{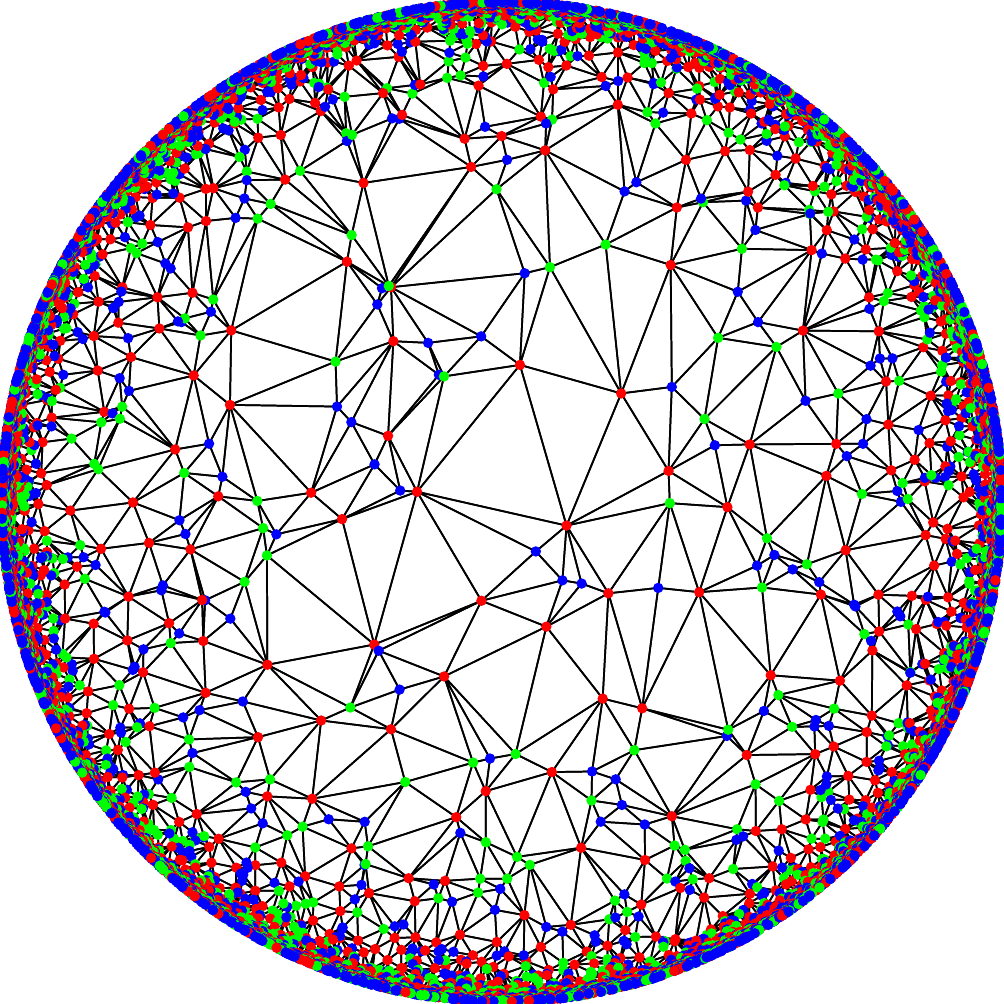}
    \caption{Delaunay triangulation with $\lambda$ equal to $1$ (left), $5$ (center) and $10$ (right).}
    \label{FF1}
  \end{center}
\end{figure}

\begin{table}[htp]
\caption{Traffic Congestion}
\label{tab1}
\begin{tabular}{|c|c|c|c|c|c|} \hline
$\lambda$ & Diameter&	AvgVflow&	MaxVflow&	AvgEflow&	MaxEflow \\ \hline
1& 20&	130764&	3.57E+07&	22031&	8.64E+06 \\ \hline
5& 31&	190195&	2.94E+07&	31906.8&	6.56E+06\\ \hline
10& 37&	221776&	2.50E+07&	37145.9&	7.53E+06\\ \hline																																
\end{tabular}
\end{table}

\subsection{Hyperbolic Space Representation in $\R^2$}
We can put a density $\rho$ in the entire $\R^2$ that corresponds to the hyperbolic metric on $\R^2$. More specifically, consider $\R^2$ with the metric 
$$
ds^2 = dr^2 + \frac{\sinh(\sqrt{-K}r)}{\sqrt{-K}}\,d\theta^2
$$
that corresponds to the hyperbolic space with uniform curvature $-K$. Hence, if $K=1$, we get the classical hyperbolic space. The area form is equal to $dA = \sinh(r)\,dr d\theta$. Then taking $\rho(r)=\frac{\sinh(r)}{r}$ we get a density that corresponds to the density to the Hyperbolic space density. In this case, 
$$
F(x) = \cosh(x)-1
$$
and it is easy to see that 
$$
\alpha(x) = \mathrm{acosh}(x^2/2+1).
$$

\subsection{A ``Generalized'' Hyperbolic Space}
\label{Sgenhy}
Let $t=1$ and $\rho(r) = \frac{2r^{(1-2a)/a}}{a(1-r^{1/a})^2}$ where $a>0$. Note that $a=1/2$ corresponds to the Poincar\'e disk. It can be show that
$\int\rho(y)y\,dy=\frac{2}{1-r^{1/a}}$ and
$$
F(x)=\frac{2x^{1/a}}{1-x^{1/a}}
$$
hence
$$
\alpha(x)=\Bigg(\frac{x^2}{4+x^2}\Bigg)^{a}.
$$
Figure \ref{FFz} gives a realization for $N=10,000$ and $a=0.01$ in comparison with the Euclidean density and the density corresponding to the Hyperbolic space representation in $\R^2$. Figure \ref{FFq} shows that the degree distribution of its corresponding Delaunay triangulation follows a power law for $a=0.01$ and $0.05$. Moreover, the degree of the power law distribution can be tuned up by choosing the appropriate intensity. Therefore, this is a way to construct a random triangulation with a power law degree distribution. Not surprisingly, $a=0.5$, which correspond to the Poincar\'e disk, does not follows a power law distribution.
\begin{figure}[!Ht]
  \begin{center}
    \includegraphics[width=5cm]{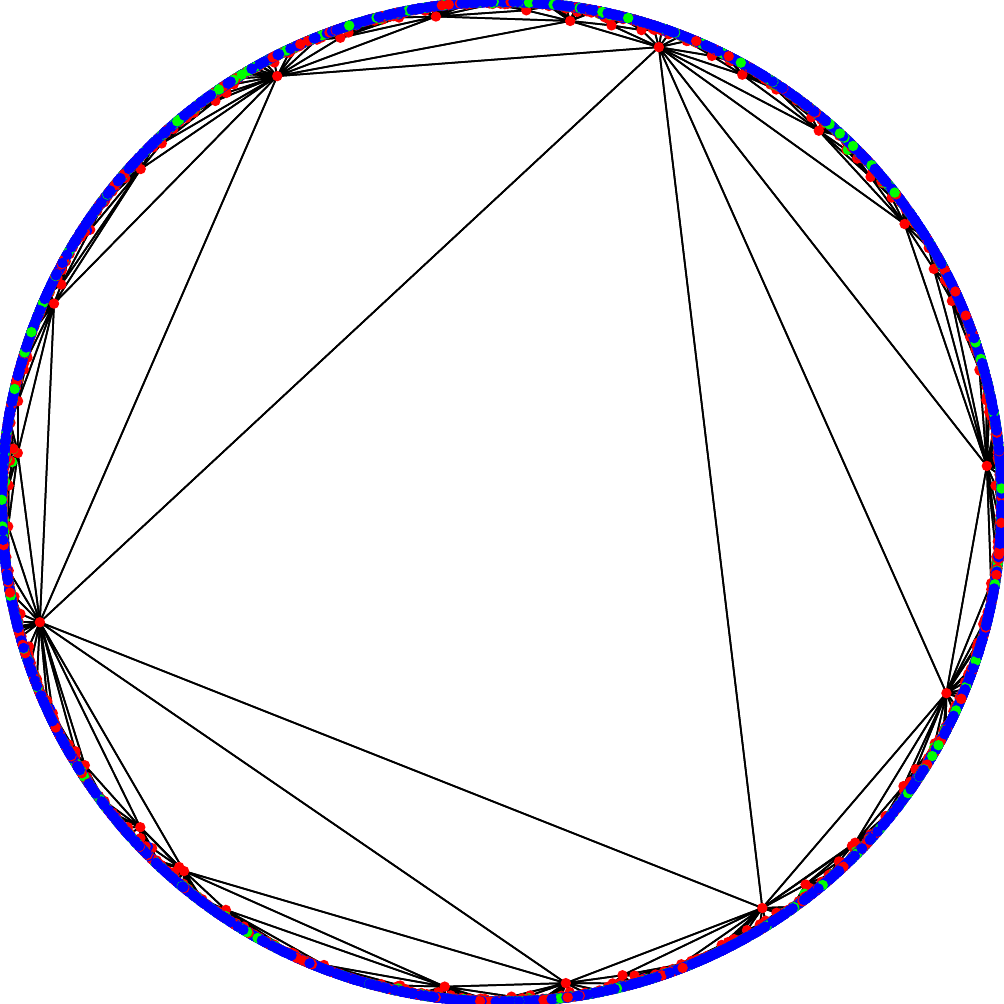}
    \includegraphics[width=5cm]{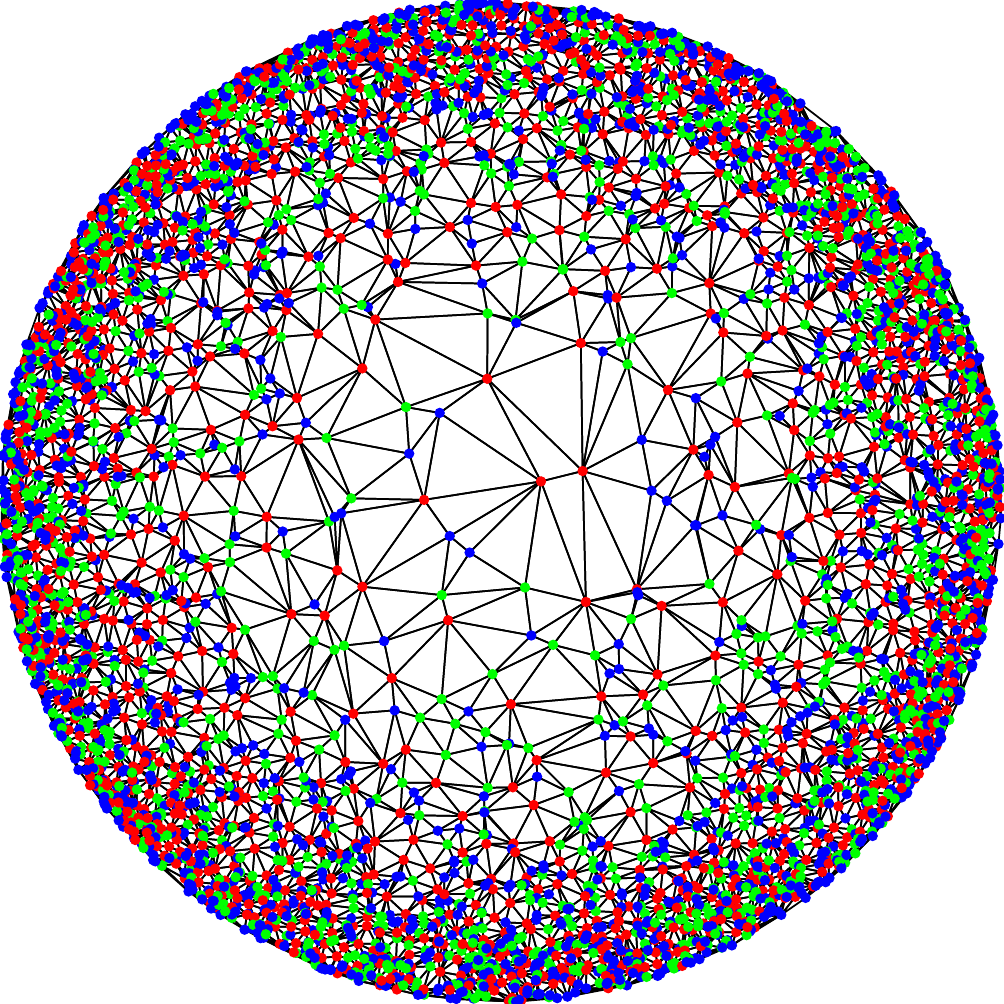}
    \includegraphics[width=5cm]{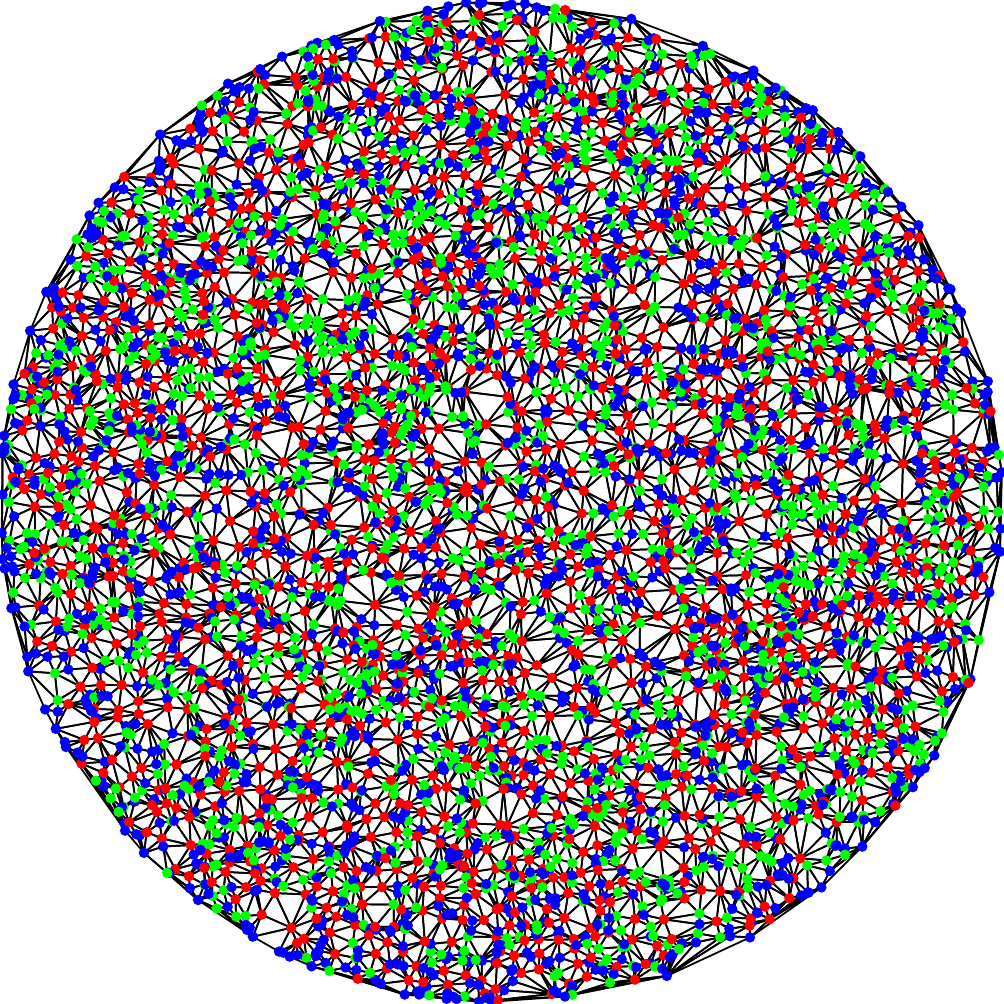}
    \caption{Delaunay triangulation for $N=10,000$ and $a=0.01$ (left). Case Hyperbolic space representation in $\R^2$ (center) and Euclidean case (right).}
    \label{FFz}
  \end{center}
\end{figure}

\begin{figure}[!Ht]
  \begin{center}
    \includegraphics[width=12cm]{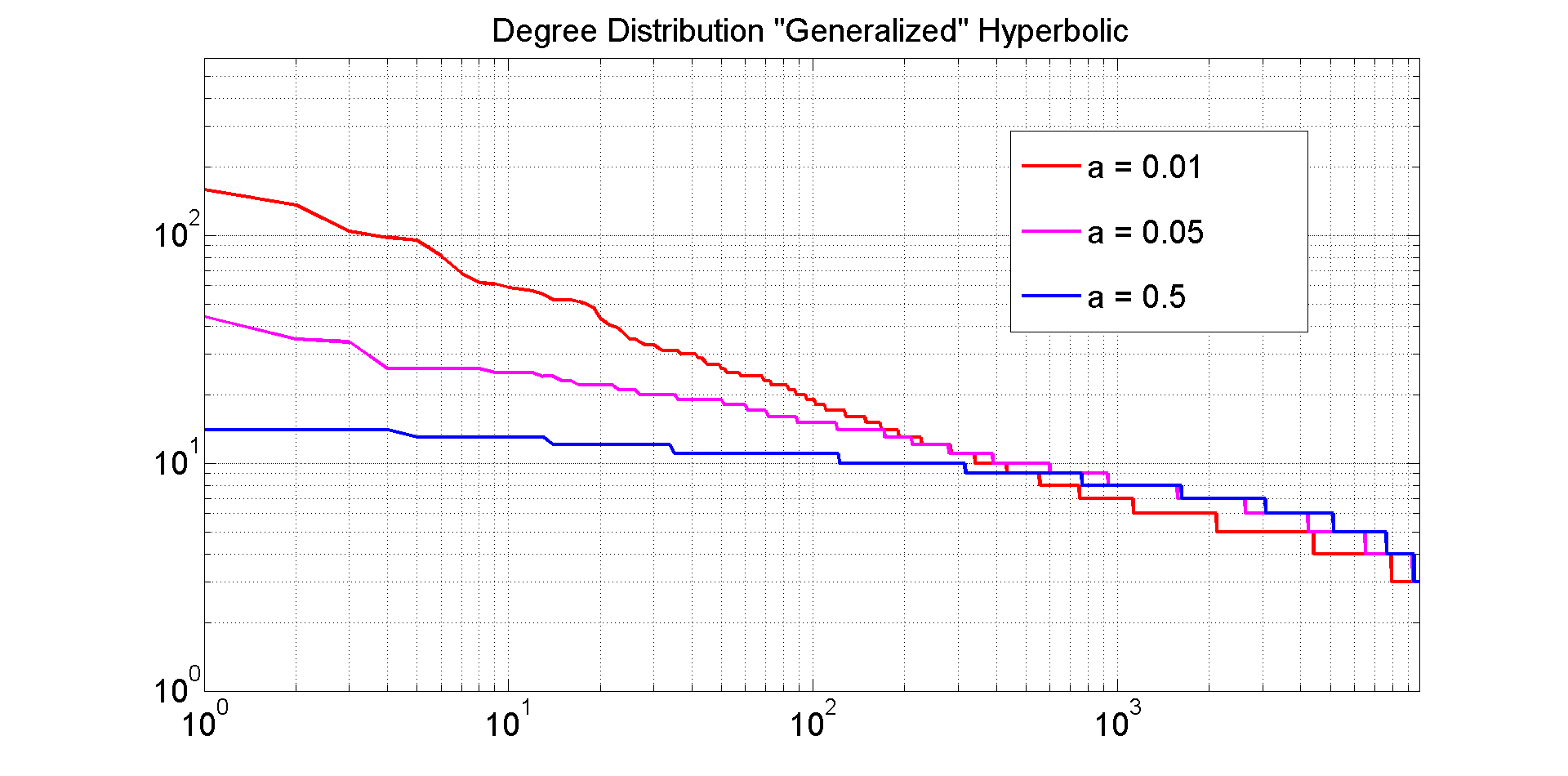}
    \caption{Degree distribution for $N=10,000$ and $a=0.01$, $a=0.05$ and $a=0.5$.}
    \label{FFq}
  \end{center}
\end{figure}

\subsection{Another Example}
Let $t=1$ and $\rho(r) = \frac{\lambda^2}{(1-r)^3}$ for $\lambda>0$. Then it can be show that
$\int\rho(r)r\,dy=\frac{\lambda^2(r-1/2)}{(1-r)^2}$ and
$F(x)=\frac{\lambda^2}{2}\frac{x^2}{(1-x)^2}$, hence
$$
\alpha(x)=\frac{x}{\lambda+x}.
$$
Figure \ref{FF2} shows the Delaunay triangulation of this for different values of the intensity $\lambda$.

\begin{figure}[!Ht]
  \begin{center}
  	\includegraphics[width=5cm]{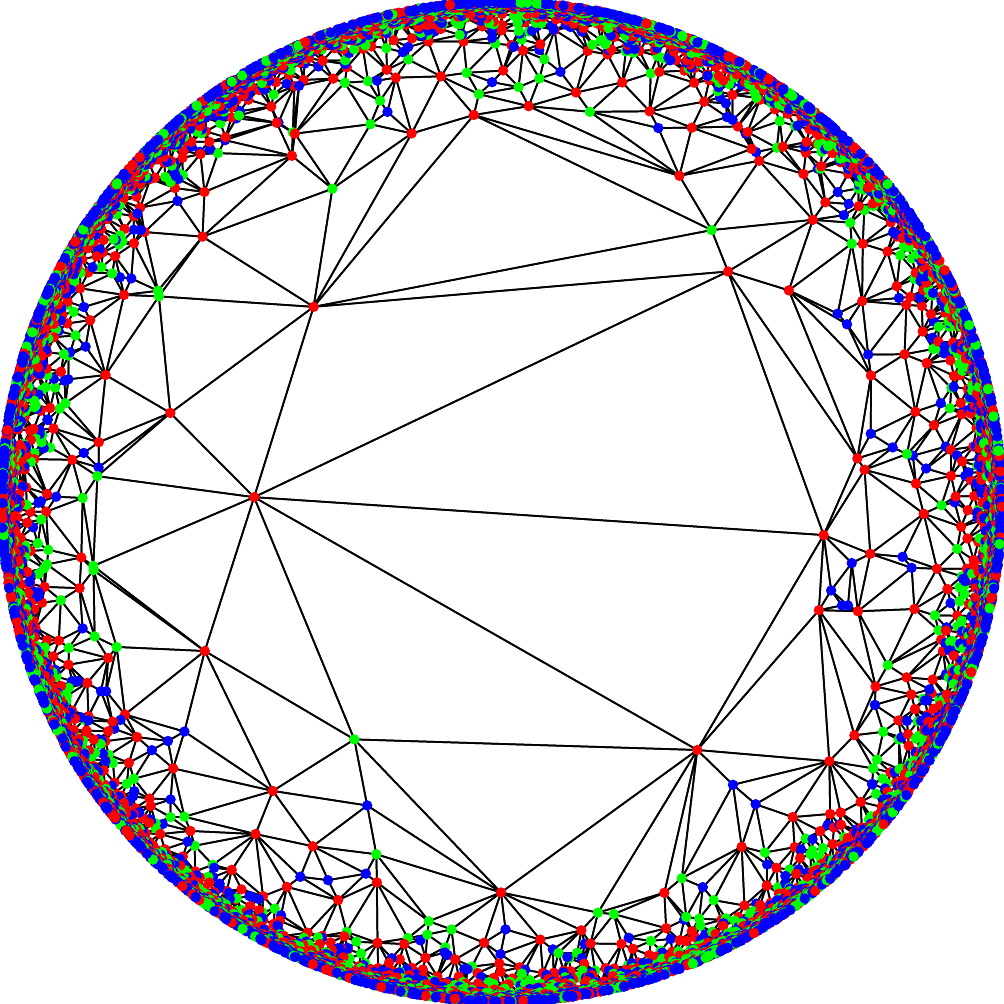}
    \includegraphics[width=5cm]{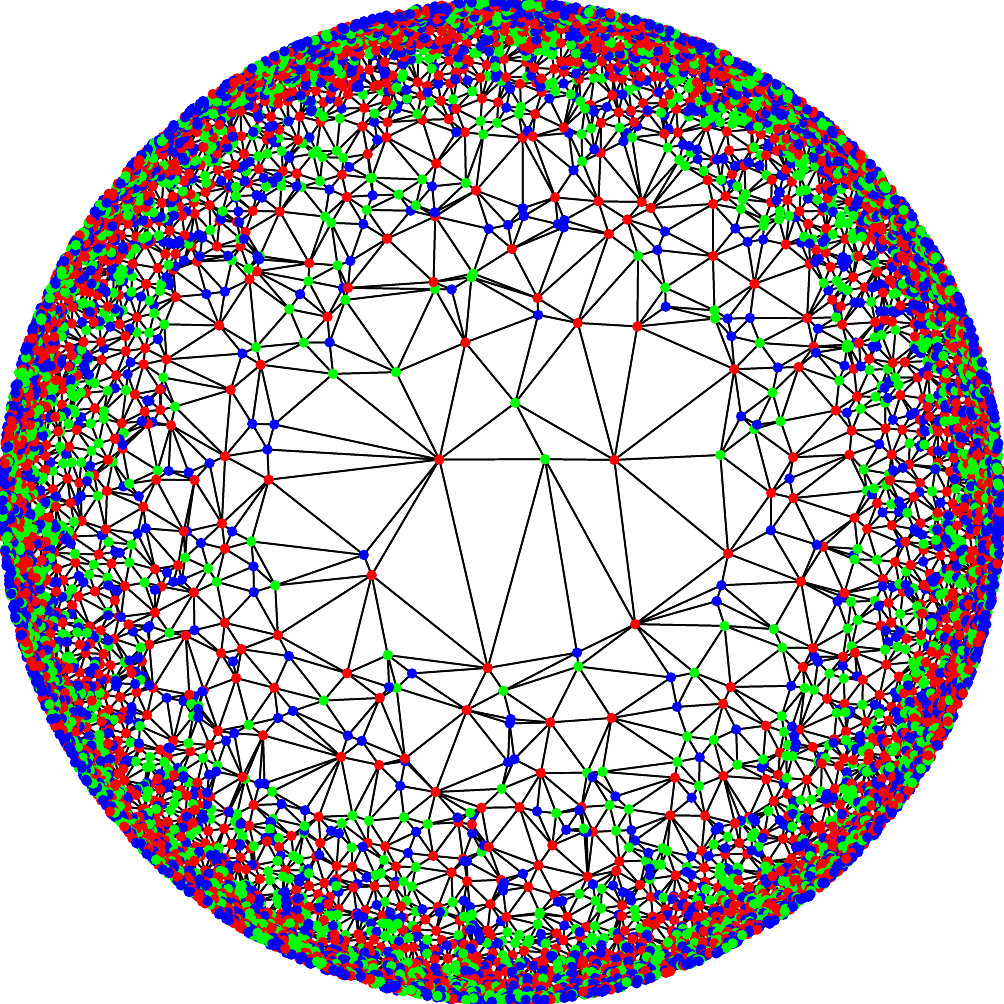}
    \includegraphics[width=5cm]{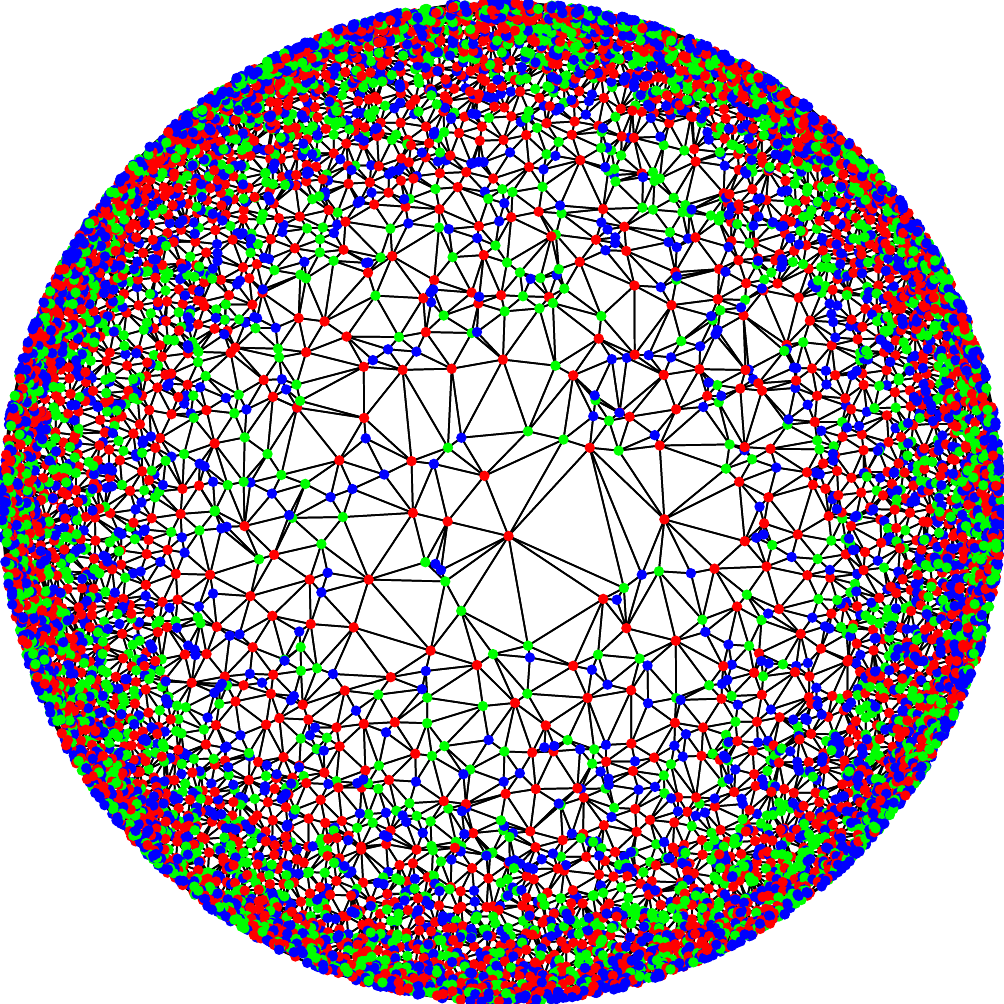}
    \caption{Delaunay triangulation with intensity $0.1$ (left), intensity $1$ (center) and intensity $10$ (right).}
    \label{FF2}
  \end{center}
\end{figure}

\subsection{Slow Growing}
Let $t=\infty$ and $\rho(r)=\frac{1}{r(r+1)}$. Then 
$$
F(x) = \int_{0}^{x}{\rho(y)y\,dy} = \ln(x+1). 
$$
Hence, 
$$
\alpha(x)=\exp\Bigg(\frac{x^2}{2}-1\Bigg).
$$

In Figure \ref{vd}, we see the asymptotic growth of the maximum vertex flow and the maximum edge flow as a function of the number of nodes $N$ of $\mathcal{T}_{n}$ for the densities
from Subsections \ref{Spoinc}--\ref{Sgenhy}. It can be shown that by changing the function $\rho$, the maximum vertex congestion changes from any possible growth rate between $\Theta(N^{3/2})$ and $\Theta(N^2)$. As discussed in Theorem \ref{n3/2}, these are the only allowed growths for planar graphs.

\begin{figure}[!Ht]
  \begin{center}
  	\includegraphics[width=8cm]{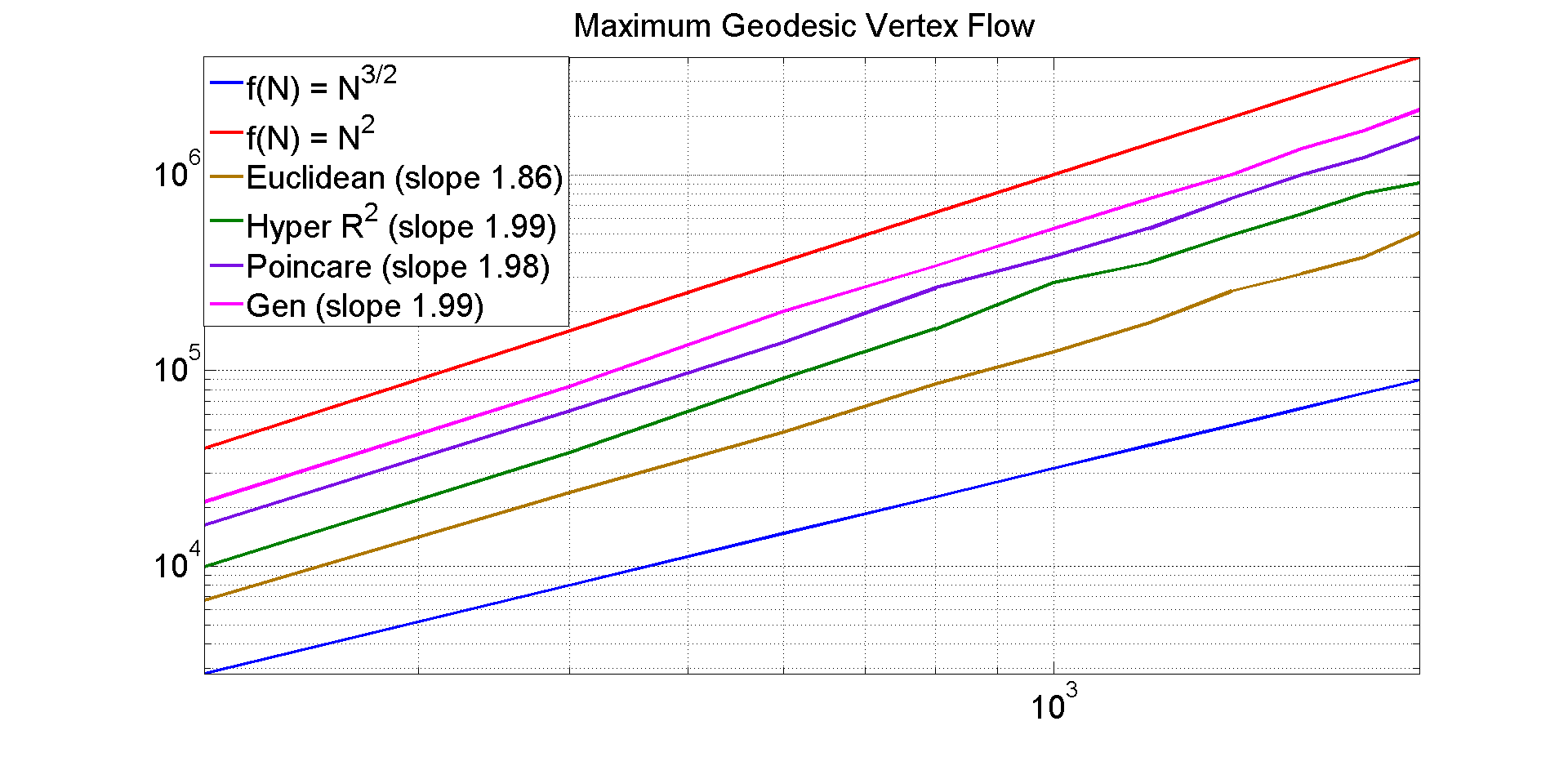}
  	\includegraphics[width=8cm]{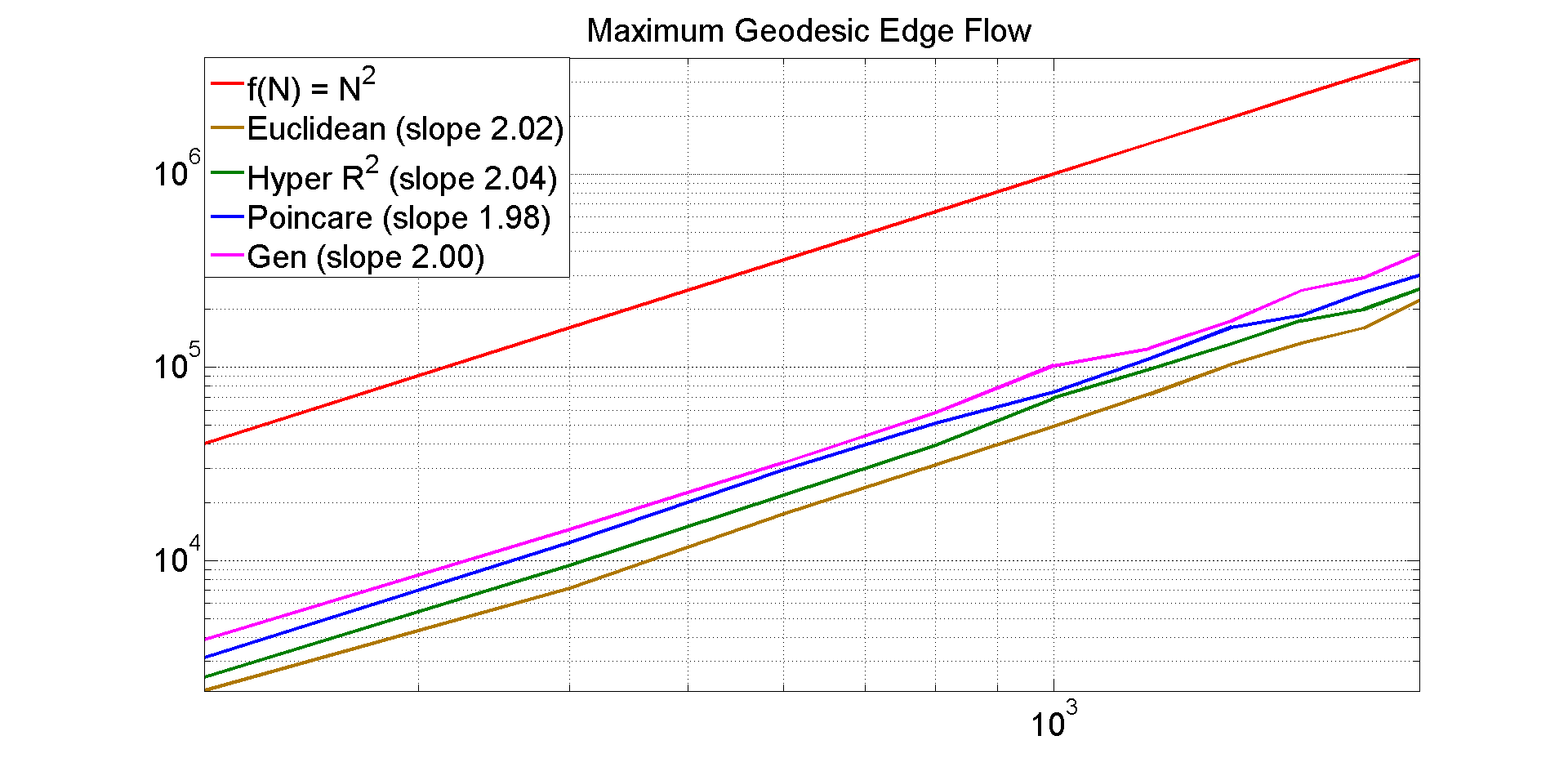}
    \caption{Log-log plots of the maximum vertex congestion for different densities as a function of the number of nodes, averaged over 20 trials.}
    \label{vd}
  \end{center}
\end{figure}

\section{Erd\"os-Renyi Graphs, Random Geometric Graphs and Expanders}\label{rg}

In this Section, we explore the maximum vertex congestion with geodesic routing for Erd\"os-Renyi, random geometric graphs and expanders. More specifically, let $\mathrm{ER}(N,p_N)$ be the random Erd\"os-Renyi graph with $N$ nodes and $p_{N}=2\log(N)/N$. Note that these graphs are connected almost surely since the probability $p_N$ is slightly above the connectivity threshold $(1+\epsilon)\log(N)/N$. Let $\mathrm{RGG}(N,r_N)$ be the random geometric graph in the unit square with $n$ nodes and radius $r_N=\sqrt{2\log(N)/N}$. These graphs again are connected since the radius is slightly above the connectivity radius. Now we turn to a particular construction of random expander graphs. We will assume that $k=2l$ is an even integer. To build a $k$-regular graph on $N$ vertices $\{1,\ldots,N\}$, what we do is pick $l$ permutations $\pi_1,\ldots,\pi_l: \{1,\ldots,N\} \rightarrow \{1,\ldots,N\}$, and let $\mathcal{G}_{k,N}$ be the graph formed by connecting $v$ to $\pi_i(v)$ for all $v \in \{1,\ldots,N\}$ and $i=1,\ldots,l$. Formally, this is not always a $k$-regular graph since there could be multiple edges and loops. It is well known that eliminating these problematic edges the graph is a $k$-regular
and an expander with probability tending to $1$ as $N\to\infty$.

Figure \ref{er_rgg}, shows the maximum vertex flow and the maximum edge flow as a function of the number of nodes for these three graph families. In the case of the expander family, we use $k=6$. As can be appreciated from these plots, there is very little congestion even for geodesic routing. Moreover, note that at the connectivity threshold is the worst case scenario since if we enlarge $p_N$ or $r_N$ the congestion decreases even more. We should also point out that the expander graphs have the least edge and vertex congestion even though these graphs have the least number of edges among the three families.

\begin{figure}[!Ht]
  \begin{center}
  	\includegraphics[width=8cm]{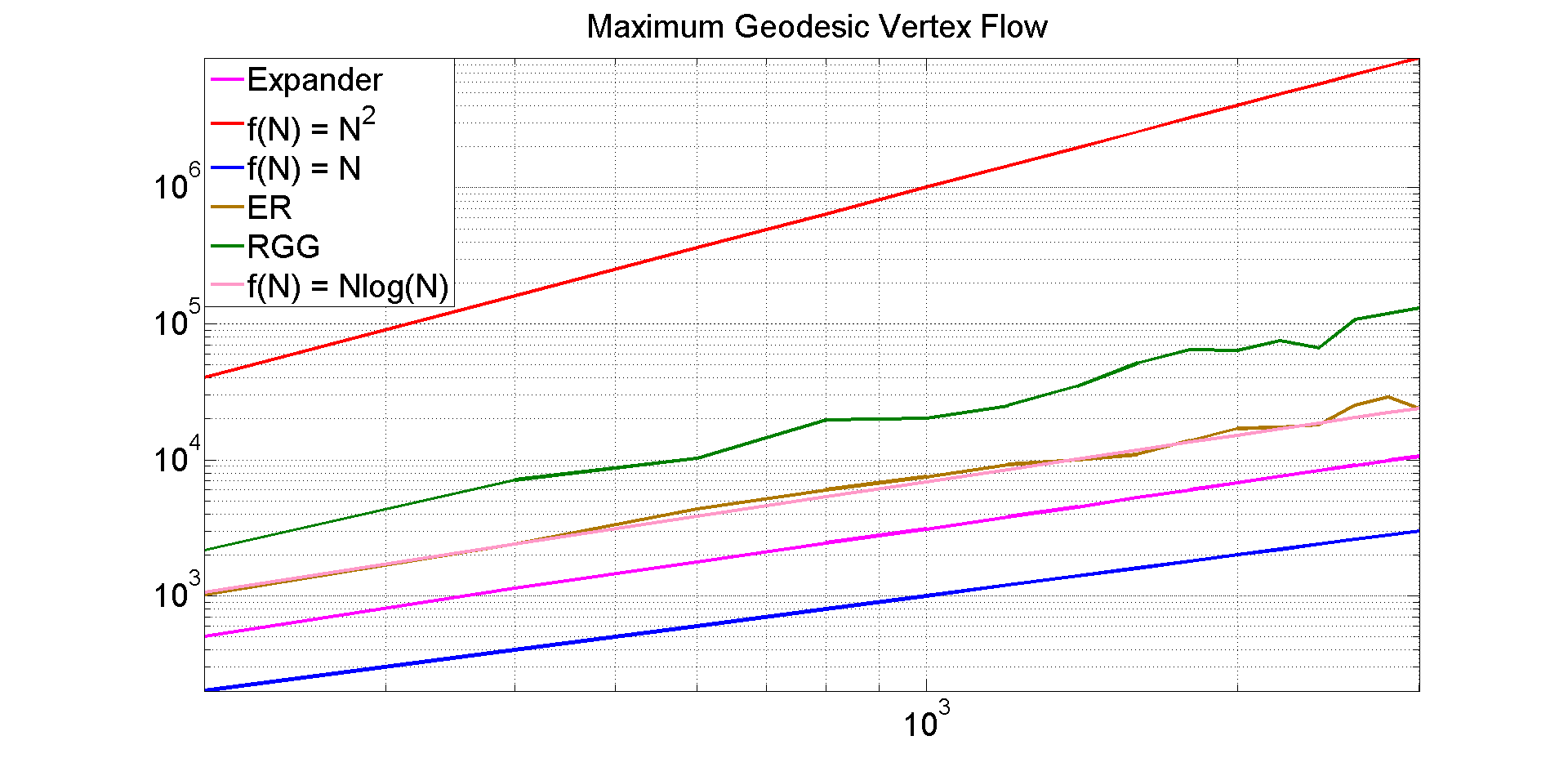}
  	\includegraphics[width=8cm]{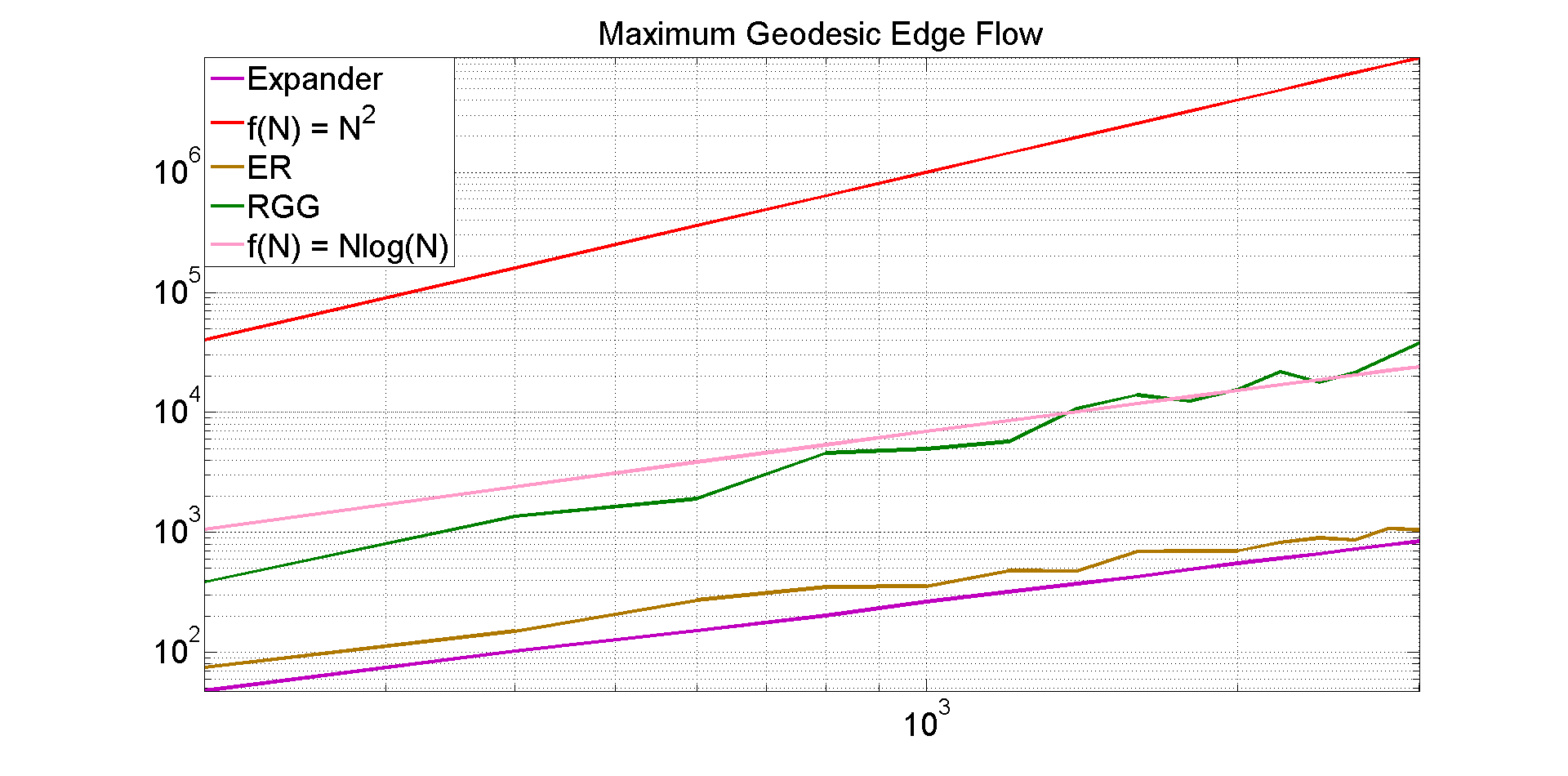}
    \caption{Log-log plot of the maximum vertex and edge congestion as a function of the number of nodes and averaged over 20 realizations.}
    \label{er_rgg}
  \end{center}
\end{figure}

\par As discussed in Section~\ref{Sconstruc}, Gromov hyperbolic graphs have congestion of the order $\Theta(N^2)$. In particular, any $k$--regular tree (also called a Bethe lattice) has highly congested nodes. More precisely, the following result was proved in \cite{Tucci1}.
\begin{prop}
For a $k$--regular tree with $N$ nodes, 
\begin{equation}
M_v = \frac{k-1}{2k}\cdot(N-1)^2+N-1.
\end{equation}
Furthermore, the maximum congestion occurs at the root.
\end{prop}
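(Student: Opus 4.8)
The plan is to exploit the fact that in a tree there is a unique path between any two vertices, so the local equal-splitting rule never activates and the flow $T(v)$ is simply a counting quantity: it equals the number of unordered pairs $\{x,y\}$ with $x\neq y$ whose connecting path contains $v$ (with the convention, from the definition of $T_n$, that a pair with $v$ as an endpoint counts). First I would set up this count explicitly. Fix a vertex $v$ of degree $d\le k$ and let $C_1,\dots,C_d$ be the connected components of the tree with $v$ deleted, $n_i=|C_i|$, so that $\sum_{i=1}^d n_i=N-1$. A pair $\{x,y\}$ routes flow through $v$ exactly when $v\in\{x,y\}$ (there are $N-1$ such pairs) or $x,y$ lie in two distinct components (there are $\sum_{i<j}n_in_j$ of those), whence
\[
T(v)=(N-1)+\sum_{i<j}n_in_j=(N-1)+\tfrac12\Big[(N-1)^2-\sum_{i=1}^d n_i^2\Big].
\]
Thus maximizing the vertex flow is equivalent to minimizing $\sum_i n_i^2$ over all vertices.

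Next I would prove the upper bound. Since every vertex has degree at most $k$, the power-mean (Cauchy--Schwarz) inequality gives $\sum_{i=1}^d n_i^2\ge (N-1)^2/d\ge (N-1)^2/k$, and substituting into the displayed identity yields $T(v)\le \frac{k-1}{2k}(N-1)^2+(N-1)$ for every $v$. For the matching lower bound and the localization at the root $o$, I would use the combinatorial structure of the $k$-regular tree on $N$ vertices (the ball of some radius in the Bethe lattice, rooted at its centre, which forces $k\mid N-1$): at $o$ the degree is exactly $k$ and the $k$ branches are isomorphic, each with $(N-1)/k$ vertices, so $\sum_i n_i^2=k\big((N-1)/k\big)^2=(N-1)^2/k$ and the bound is attained with equality, giving $M_v=T(o)=\frac{k-1}{2k}(N-1)^2+(N-1)$.

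To see the maximum occurs only at the root, I would observe that for any non-root vertex $v$ the component of the complement containing $o$ properly contains an entire root-branch together with $o$, hence has strictly more than $(N-1)/k$ vertices; a one-line estimate then shows $\sum_i n_i^2>(N-1)^2/k$ and therefore $T(v)<T(o)$. The arithmetic here is entirely elementary; the only genuinely delicate point is this last step — controlling $\sum_i n_i^2$ for non-root vertices — which really does need the balanced branch structure of the $k$-regular tree and the assumption $k\ge 3$ (for $k=2$ the object degenerates to a path $P_N$ whose centre plays the role of the root, and $N$ must be odd). Accordingly, the cleanest writeup pins down at the outset precisely which finite trees are meant by ``$k$-regular tree with $N$ nodes'' so that a balanced root of degree $k$ with equal branches exists; once that is fixed, the three paragraphs above complete the proof.
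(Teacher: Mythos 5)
Your proof is correct and complete. Note that the paper itself states this proposition without proof, quoting it from the reference [Baryshnikov--Tucci, \emph{Asymptotic traffic flow in a Hyperbolic Network I}], so there is no in-paper argument to compare against; your derivation --- unique paths in a tree, the identity $T(v)=(N-1)+\tfrac12\big[(N-1)^2-\sum_i n_i^2\big]$ over the components of $T\setminus\{v\}$, the bound $\sum_i n_i^2\ge (N-1)^2/k$ with equality exactly at the balanced root, and the strict inequality at non-root vertices via the oversized root-side component --- is a valid, self-contained proof, and you rightly flag that the statement implicitly means a balanced finite ball of the Bethe lattice (so that a degree-$k$ root with equal branches exists and $k\mid N-1$), matching the small cases such as the star $K_{1,3}$, where the formula gives $6$.
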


Note that trees are some of the most congested graphs one can consider. The reason is that much of the traffic must pass through the root of the tree. This leads to the following question.
\begin{question}
Is there a way to reduce the vertex and edge congestion in a graph by adding a relatively small number of edges? 
\end{question}
We answer this question positively, but we first need the following Lemma.
\begin{lem}\label{LL}
Let $G$ be a graph with bounded geometry, i.e. $\sup_{v\in G} \mathrm{deg}(v)\leq \Delta<\infty$. Then for every $v\in G$, the traffic flow passing through $v$ satisfies
\begin{equation}
T(v)\leq \Delta^2(\Delta-1)^{D-2}\cdot D^2
\end{equation}
where $D=\mathrm{diam}(G)$.
\end{lem}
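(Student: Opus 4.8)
The plan is to bound $T(v)$ by counting, crudely, how many geodesic paths can possibly pass through $v$ and how much flow each such path carries. First I would observe that any flow-carrying path through $v$ is a shortest path between some pair $(x,y)$ with $x,y \in G$, so it splits at $v$ into an initial segment from $x$ to $v$ and a terminal segment from $v$ to $y$, each of which is itself a geodesic, and each of length at most $D = \mathrm{diam}(G)$. So I would start by bounding $B(v) := $ the number of vertices $x$ that can be joined to $v$ by a geodesic of length at most $D$; since $G$ has maximum degree $\Delta$, a standard ball-counting estimate gives $|B(v)| \le 1 + \Delta + \Delta(\Delta-1) + \cdots + \Delta(\Delta-1)^{D-1}$, which is $O(\Delta(\Delta-1)^{D-1})$. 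Hence the number of ordered pairs $(x,y)$ whose geodesics can route through $v$ is at most $|B(v)|^2 = O\big(\Delta^2(\Delta-1)^{2(D-1)}\big)$. This already bounds the number of source–destination pairs contributing to $T(v)$, but not yet by the claimed quantity — the exponent is too large — so the count must be organized more carefully: instead of choosing $x$ and $y$ independently within distance $D$ of $v$, I should use that $d(x,y) \le D$ forces $d(x,v) + d(v,y)$ to be at most $D$ (in fact exactly $d(x,y)$, since $v$ lies on a shortest $x$–$y$ path). So if $d(x,v) = i$ then $d(v,y) \le D - i$, and the number of such pairs is at most $\sum_{i=0}^{D}\big(\Delta(\Delta-1)^{i-1}\big)\big(\Delta(\Delta-1)^{D-i-1}\big) = \Delta^2(\Delta-1)^{D-2}(D+1)$, which gives the $\Delta^2(\Delta-1)^{D-2}$ factor together with one factor of $D$ (the $D+1 \le \text{const}\cdot D$).

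Next I would account for the possibility that a single pair $(x,y)$ contributes strictly less than one full unit of flow through $v$, because the flow splits among multiple shortest paths; conversely a pair might in principle route along several geodesics each passing through $v$. Here the key point is that the total flow out of $x$ destined for $y$ is exactly $1$, and when this flow is split at each vertex equally among the neighbors that make progress, the amount reaching $v$ is at most $1$ regardless of how many shortest paths exist — so each pair contributes at most $1$ to $T(v)$. (The locally-equal splitting rule described in the introduction ensures the flow is conserved and never amplified, so no pair can contribute more than its unit.) Combining this with the pair count above would already give $T(v) \le \Delta^2(\Delta-1)^{D-2} \cdot (D+1)$. To reach the stated bound with the extra factor of $D$, I would be slightly more generous — e.g.\ allowing for the at most $D$ possible values of $d(x,y)$ or for a pair being counted once per geodesic through $v$ of which there are at most, say, $D$-many relevant splitting choices — so that the final estimate is absorbed into $D^2$.

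The main obstacle I anticipate is not any deep idea but getting the bookkeeping on exponents exactly right: the naive "ball of radius $D$ squared" bound overshoots, and one has to exploit the additivity $d(x,v)+d(v,y)=d(x,y)\le D$ to collapse one of the two geometric sums, turning $(\Delta-1)^{2D}$ into $(\Delta-1)^{D}$ at the cost of a linear-in-$D$ factor. A secondary subtlety is justifying rigorously that the locally-equal split never lets a pair deliver more than one unit through any single vertex; this is intuitively clear from flow conservation but should be stated as a short lemma or remark (the flow into $v$ equals the flow out of $v$ for that commodity, and the total commodity is $1$). Once those two points are in place, the inequality $T(v)\le \Delta^2(\Delta-1)^{D-2}D^2$ follows by multiplying the pair count by the per-pair bound and absorbing constants.
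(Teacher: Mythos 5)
Your proposal follows essentially the same route as the paper: decompose $G$ into spheres $S_k=\{x:d(v,x)=k\}$ with $|S_k|\le\Delta(\Delta-1)^{k-1}$, observe that a pair $(x,y)$ can route flow through $v$ only if $d(x,v)+d(v,y)=d(x,y)\le D$, bound each pair's contribution by one unit, and conclude $T(v)\le\sum_{k+l\le D}|S_k|\,|S_l|\le\Delta^2(\Delta-1)^{D-2}D^2$. The only caution is that your intermediate claim $T(v)\le\Delta^2(\Delta-1)^{D-2}(D+1)$ substitutes sphere counts where ball counts are required (and would fail, e.g., for $\Delta=2$), but since you then relax to the double sum over all $k+l\le D$ --- exactly the paper's step, with at most $D^2$ terms each bounded by $\Delta^2(\Delta-1)^{D-2}$ --- the stated bound goes through.
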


\begin{proof}
Let $v\in G$ and define $S_{k}:=\{x\in G: d(v,x)=k\}$. Then it is clear that $G =\{v\}\cup\bigcup_{p=1}^{D}S_{p}$ and moreover $|S_{k}|\leq \Delta(\Delta-1)^{k-1}$. Furthermore,
$$
T(v)\leq \sum_{k+l\leq D}{|S_{k}|\cdot|S_{l}|}
$$
where the inequality is coming from the fact that if $k+l>D$ then the geodesic path between a node in $S_{k}$ and a node in $S_{l}$ does not pass through $v$. Hence,
$$
T (v)\leq \sum_{k+l\leq D}{\Delta^2(\Delta-1)^{k+l-2}}\leq \Delta^2(\Delta-1)^{D-2}D^2.
$$
\end{proof}

Given a graph $G$, a matching $M$ in $G$ is a set of pairwise non-adjacent edges; that is, no two edges share a common vertex. The following result is due to Bollobas and Chung \cite{Bol}. 

\begin{thm}\label{Thm4x4}
Suppose $G$ is a graph on $N$ vertices with bounded degree $k$ satisfying the property that for any $x\in V(G)$, the $i$-th neighborhood $N_i(x)$ of $x$ (i.e., $N_{i}(x)=\{y:d(x,y)=i\}$) contains at least $ck(k-1)^{i-2}$ vertices for $i\leq(1/2 + \epsilon)\log_{k-1}(N)$, where
$\epsilon$ and $c$ are fixed positive values. Then by adding a random matching to $G$ gives
a graph $H$ whose diameter $D(H)$ satisfies
$$
\log_{k}(N)-C\leq D(H)\leq \log_{k}(N) + \log_{k}\log(N)+ C
$$
with probability tending to $1$ as $N$ goes to infinity, where $C$ is a constant depending on $\epsilon$
and $c$.
\end{thm}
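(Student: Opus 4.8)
The plan is to establish the two bounds separately; the lower bound is elementary and the matching upper bound carries all the weight. For the lower bound, note that adding a matching raises each degree by at most one, so $H$ has maximum degree at most $k+1$, whence any ball of radius $D$ in $H$ contains at most $1+(k+1)(1+k+\cdots+k^{D-1})\le C\,k^{D}$ vertices for an absolute constant $C$. If $H$ is connected, taking $D=D(H)$ forces $N\le C\,k^{D(H)}$, i.e. $D(H)\ge\log_k N-\log_k C$; and if $H$ is disconnected the inequality is vacuous. Connectivity of $H$ with high probability will itself fall out of the upper bound argument.

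For the upper bound I fix two vertices $u,v$, bound $\Pr\!\big[d_H(u,v)>\log_kN+\log_k\log N+C'\big]$ by $N^{-3}$, and finish with a union bound over the $\binom{N}{2}$ pairs. The device is a two-ball argument run while the random matching is exposed lazily: grow balls $A=B_t^H(u)$ and $B=B_t^H(v)$ by breadth-first search in $H$, revealing the matching edge at a vertex only the first time the search reaches it, so that conditioned on what has been revealed the matching on the still-untouched vertices stays uniform. Choose the radius $t$ so that at the stopping time each of $A,B$ contains at least $m:=\lceil C_1\sqrt{N\log N}\,\rceil$ vertices whose matching partner is still unexposed. Then, conditionally, those $m$ partners on the $A$-side form a uniform partial matching into the $N-o(N)$ untouched vertices, so the number of matching edges between $A$ and $B$ has mean $\gtrsim\log N$ and, by a lower-tail bound, is positive except with probability $\le N^{-3}$ once $C_1$ is chosen large in terms of $\epsilon$ and $c$. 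A single such edge yields $d_H(u,v)\le 2t+1$, and since the same reasoning shows $B_{t+1}^H(u)\cap B_{t+1}^H(v)\neq\varnothing$ for every pair, $H$ is connected with high probability, closing the loop with the lower bound.

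The heart of the matter is why $t=\tfrac12\log_kN+\tfrac12\log_k\log N+O(1)$ suffices: the balls $B_t^H(u)$ must reach size $\Theta(\sqrt{N\log N})$ essentially at the largest rate a graph of maximum degree $k+1$ allows, namely by a factor of about $k$ per step. Growing inside $G$ alone does not achieve this, since the neighborhood hypothesis gives only a factor of about $k-1$ per step in $G$, which would cost $\tfrac12\log_{k-1}N>\tfrac12\log_kN$ steps. The missing factor comes from the matching: while $|B_j^H(u)|\ll N$, the matching partners of the current frontier land on fresh vertices with probability $1-o(1)$ each, contributing roughly one extra unit of branching, so the effective growth rate is about $(k-1)+1=k$ — exactly the behaviour of a random $(k+1)$-regular graph, whose diameter is $\log_kN+\log_k\log N+O(1)$. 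Concretely one tracks the $H$-frontier sizes $Z_j$: the hypothesis, applied to the $G$-balls around the vertices already reached, keeps the $G$-part of the next frontier at least $(k-1-o(1))\,Z_j$ in size, while a first-moment computation together with an Azuma estimate shows that only an $o(1)$ fraction of frontier vertices have their matching partner fall back inside the current ball. The slack $i\le(\tfrac12+\epsilon)\log_{k-1}N$ in the hypothesis is precisely what lets the ball overshoot $\sqrt N$ up to $\Theta(\sqrt{N\log N})$, producing the $\log_k\log N$ term and the room for the union bound.

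The genuinely delicate part — and the main obstacle — is controlling this interplay between the fixed graph $G$ and the random matching as the latter is exposed. One must organize the breadth-first exposure so that the matching on untouched vertices stays uniform and so that a guaranteed $\Theta(\sqrt{N\log N})$ frontier vertices still have unexposed partners at the stopping time; one must use the neighborhood hypothesis — which controls balls around \emph{original} vertices, not the arbitrary frontiers thrown up by a mixed $G$-and-$M$ search — to lower bound the frontier growth, which forces one to cover $B_j^H(u)$ by $G$-balls centered at already-reached vertices while keeping the overcount in hand; and one must absorb the lower-order losses (matching edges that double back into the ball, the single unmatched vertex when $N$ is odd, and the immaterial fact that a few matching edges may coincide with edges of $G$). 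Shaving the correction term to exactly $\log_k\log N+C$ rather than $\log_k\log N+\log_k\log\log N$ calls for doing the final connection in two rounds of exposure instead of one — routine once the growth estimate is in place — and it is there that the constant $C$, depending on $\epsilon$ and $c$, gets absorbed.
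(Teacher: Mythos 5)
You should first know that the paper contains no proof of this statement to compare against: Theorem~\ref{Thm4x4} is quoted verbatim as a result of Bollob\'as and Chung \cite{Bol}, so the authors' ``proof'' is simply the citation. Your sketch does follow the general line of the known argument (lower bound by volume counting in a graph of maximum degree $k+1$; upper bound by growing two balls to size $\Theta(\sqrt{N\log N})$ under lazy exposure of the matching, then finding a matching edge between them and union-bounding over pairs), and the lower-bound half and the final ``edge between two large balls'' computation are fine as stated.

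The genuine gap is the growth estimate, which is exactly where the theorem lives and which you assert rather than prove. The hypothesis controls the sizes of the spheres $N_i(x)$ in $G$ around \emph{single} vertices; it is not a per-step expansion property of arbitrary sets, so your claim that ``the $G$-part of the next frontier is at least $(k-1-o(1))Z_j$'' does not follow from it --- indeed it is not even the right mechanism in the motivating case $G=$ cycle ($k=2$), where the $G$-growth of the ball is proportional to the number of arcs, not to the frontier size, and the factor-$k$ growth emerges only from the interplay of many matched centers each spawning a slowly growing $G$-ball. The correct argument (as in Bollob\'as--Chung) covers the $H$-ball by $G$-balls of suitably chosen radii centered at the matched vertices, uses the hypothesis once per center, and controls overlaps and back-edges using the residual randomness of the matching; you explicitly identify this as ``the genuinely delicate part'' and ``the main obstacle,'' but then defer it (``routine once the growth estimate is in place''), which is circular since that estimate is the heart of the proof. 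The concentration input (your ``Azuma estimate''), the bookkeeping that guarantees $\Theta(\sqrt{N\log N})$ frontier vertices still have unexposed partners at the stopping time, and the two-round refinement that pins the correction term to $\log_k\log N + C$ are likewise named but not carried out. As it stands the proposal is a plausible outline of the standard proof, not a proof.
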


Now we are ready to answer the previous question. 

\begin{thm}
Let $G$ be a graph on $N$ vertices with bounded degree $k$ satisfying the hypothesis of Theorem~\ref{Thm4x4}. Let $H$ be the graph constructed by adding a random matching to $G$. Then the maximum vertex flow $M_{v}(N)$ with geodesic routing on $H$ satisfies
\begin{equation}
M_{v}(N)\leq k^{C}N\log(N)\log_{k}^2(N) + o(N\log(N)\log_{k}^2(N)).
\end{equation}
with probability tending to $1$ as $N\to\infty$.
\end{thm}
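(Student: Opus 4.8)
The plan is to feed the near-logarithmic diameter bound of Theorem~\ref{Thm4x4} into the deterministic per-vertex flow estimate of Lemma~\ref{LL}, applied to the graph $H$. First I would record that $H$ has bounded geometry: the random maximal matching adds at most one edge at each vertex, so $\Delta:=\sup_{v\in H}\mathrm{deg}(v)\le k+1$, and since adjoining edges to the (connected) graph $G$ keeps it connected, $D(H):=\mathrm{diam}(H)$ is finite and Lemma~\ref{LL} applies to $H$. Because $G$ satisfies the hypothesis of Theorem~\ref{Thm4x4}, with probability tending to $1$ we have
\begin{equation}
D(H)\le \log_k(N)+\log_k\log(N)+C .
\end{equation}
From here on I condition on this event, so that everything that follows is deterministic.

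Next I would apply Lemma~\ref{LL} to $H$ with $\Delta\le k+1$ and $D=D(H)$: for every $v\in H$,
\begin{equation}
T(v)\le \Delta^2(\Delta-1)^{D-2}D^2\le (k+1)^2\,k^{\,D-2}\,D^2 .
\end{equation}
Substituting the diameter bound gives $k^{\,D-2}\le k^{\,C-2}N\log(N)$ and $D^2=\log_k^2(N)\,(1+o(1))$, hence
\begin{equation}
\max_{v\in H}T(v)\le (k+1)^2k^{\,C-2}\,N\log(N)\log_k^2(N)\,(1+o(1)).
\end{equation}
Writing $(k+1)^2k^{C-2}=k^{C}(1+1/k)^2$ and enlarging the constant $C$ (which already depends only on $\epsilon$ and $c$) to absorb $(1+1/k)^2$, the right-hand side is exactly $k^{C}N\log(N)\log_k^2(N)+o(N\log(N)\log_k^2(N))$. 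Taking the maximum over $v$ yields the asserted bound on $M_v(N)$, and since the only probabilistic input was the diameter estimate, it holds with probability tending to $1$.

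The step that really carries the argument — and the one to be careful about — is that the estimate of Lemma~\ref{LL} is extremely crude: it discards the local splitting rule, all cancellations, and the true shape of geodesic balls, retaining only the exponential-in-diameter volume bound $(\Delta-1)^{D-2}$. The whole scheme therefore hinges on having the \emph{sharp} logarithmic diameter $\log_k(N)+O(\log\log N)$ supplied by Theorem~\ref{Thm4x4}; with a weaker diameter estimate (say $O(\log^2 N)$) the factor $k^{D}$ would be super-polynomial in $N$ and the conclusion would collapse, so the role of the Bollob\'as--Chung theorem is essential rather than incidental. A secondary, purely cosmetic point is the leading constant: the bound comes out naturally with constant $(k+1)^2k^{C-2}$ rather than $k^{C}$, so one must either re-index $C$ or phrase the theorem with an unspecified constant depending on $k,\epsilon,c$. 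Finally one should confirm that ``geodesic routing on $H$'' in the statement is precisely the locally-equal-splitting flow $T(\cdot)$ of the introduction that Lemma~\ref{LL} bounds — it is, so no additional argument is needed on that point.
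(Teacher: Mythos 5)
Your proposal is correct and follows essentially the same route as the paper: apply Lemma~\ref{LL} to $H$ and substitute the diameter bound $D(H)\le\log_k(N)+\log_k\log(N)+C$ from Theorem~\ref{Thm4x4}, which immediately yields the leading term $k^{C}N\log(N)\log_k^2(N)$. Your treatment is in fact slightly more careful than the paper's, which silently uses $k^{D(H)}D(H)^2$ even though the matching raises the degree bound to $k+1$; your observation that the extra factor $(1+1/k)^2$ must be absorbed into the constant is a legitimate (and minor) correction, not a deviation.
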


\begin{proof}
Applying Lemma \ref{LL} gives
$$
M_v(H)\leq k^{D(H)}D(H)^2.
$$
Since
$$
D(H)\leq \log_{k}(N) + \log_{k}\log(N)+ C
$$
with high probability, we see that
\begin{eqnarray*}
M_v(H) &\leq & (\log_{k}(N) + \log_{k}\log(N)+ C)^{2}k^{\log_{k}(N) + \log_{k}\log(N)+ C}\\
& = & k^{C}N\log(N)\log_{k}^2(N) + o(N\log(N)\log_{k}^2(N)).
\end{eqnarray*}
\end{proof}

In particular, in every $k$--regular tree, we can reduce the congestion from $\Theta(N^2)$ to $O(N\mathrm{polylog}(N))$ by adding at most an extra edge for every node. The same applies for the Delaunay triangulations constructed in the previous sections.

It was proved in \cite{BVega} that the diameter of a random $k$--regular satisfies the following property.

\begin{thm}\label{Boll}
For $k\geq 3$ and sufficiently large $N$, a random $k$--regular graph $G$ with $N$ nodes has diameter at most  
$$
\log_{k-1}(N) + \log_{k-1}(\log_{k-1}(N))+C
$$
where $C$ is a fixed constant depending on $k$ and independent on $N$.
\end{thm}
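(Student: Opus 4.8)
The statement is the Bollob\'as--de la Vega diameter bound, so the plan is to recover it through the configuration (pairing) model. First I would replace the uniform random $k$--regular graph by the pairing model: attach $k$ half-edges to each of the $N$ vertices and take a uniformly random perfect matching of the $kN$ half-edges. Conditioning this multigraph on being simple gives exactly the uniform $k$--regular graph, and since $\mathbb{P}(\mathrm{simple})$ converges to a strictly positive constant as $N\to\infty$, every event of probability $1-o(1)$ in the pairing model holds with high probability after conditioning. Thus it suffices to show that, with probability $1-o(1)$, every ordered pair $(u,v)$ of vertices satisfies $d(u,v)\le D$, where $D:=\log_{k-1}(N)+\log_{k-1}(\log_{k-1}(N))+C$ and $C=C(k)$ is a constant to be fixed.

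The core is a ball-growth estimate, uniform over the starting vertex. Fix $u$ and expose a breadth-first search from $u$ layer by layer, revealing the partner of each half-edge only when the search reaches it. As long as the set of already-exposed half-edges has size $o(N)$, a newly exposed half-edge is matched to a half-edge at a previously unseen vertex with probability $1-o(1)$, so the layer sizes $|S_i(u)|$ grow, up to a $1+o(1)$ factor, like powers of $k-1$; a martingale/Chernoff bound applied to the exposure process then gives, for a single $u$, that $|S_i(u)|\ge\frac12(k-1)^i$ for all $i$ with $(k-1)^i\le N^{2/3}$, with failure probability $N^{-\omega(1)}$ (growth can only fail through an atypical early accumulation of back-edges, which is extremely unlikely while the exposed set is tiny). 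A union bound over the $N$ choices of $u$ is then affordable, so whp $|B_r(u)|\ge\sqrt{N\log N}$ for every $u$ simultaneously, where $r:=\big\lceil\tfrac12\big(\log_{k-1}(N)+\log_{k-1}(\log_{k-1}(N))\big)\big\rceil+C_0$ for a suitable constant $C_0$.

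Next I would run the birthday-paradox step pair by pair. Fix $(u,v)$ and grow the two searches to radius $r$; condition on the (whp) event that $|B_r(u)|,|B_r(v)|\ge\sqrt{N\log N}$, and assume the two balls are disjoint (otherwise we are already done). The $\Theta((k-1)^r)$ still-unmatched half-edges on the boundary of $B_r(u)$ are now paired to a uniformly chosen system of partners among the remaining $\Theta(kN)$ half-edges, so the probability that none of them hits a half-edge of $B_r(v)$ is at most $\big(1-c\,|B_r(v)|/N\big)^{\Theta((k-1)^r)}\le\exp\!\big(-c'(k-1)^{2r}/N\big)\le\exp(-c''\log N)$, which is below $N^{-3}$ once $C_0$ (hence $C$) is taken large enough. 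On the complement, $d(u,v)\le 2r+1\le D$. Adding the failure probability of the ball-growth event for $u$ or $v$, each pair fails with probability $\le N^{-3}+N^{-\omega(1)}$, and a union bound over the $\binom{N}{2}$ pairs finishes the proof.

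The hard part will be the ball-growth estimate: the breadth-first search is not literally a branching process, so I would have to quantify carefully (i) that the growth rate is thinned only by a $1+o(1)$ factor as the exposed set climbs toward $\sqrt{N\log N}$ --- this must be precise enough not to erode the $\log_{k-1}\log_{k-1}N$ term --- and (ii) the tail of the layer sizes uniformly over all $N$ starting points, which is exactly where the martingale concentration is needed; controlling short cycles so that they cost only a constant number of extra levels is the crux. A shortcut that avoids all of this BFS bookkeeping, at the price of invoking an external contiguity result, is to write $\mathcal G_{N,k}$ as a superposition $\mathcal G_{N,k-1}\cup\mathcal M$ of a random $(k-1)$--regular graph and a random perfect matching: a random $(k-1)$--regular graph whp has $i$-th neighborhoods of size $\ge c(k-1)(k-2)^{i-2}$, hence satisfies the hypothesis of Theorem~\ref{Thm4x4} with $k-1$ in place of $k$, and that theorem then delivers the diameter bound $\log_{k-1}(N)+\log_{k-1}\log(N)+C$ directly; here the only obstacle is quoting the contiguity correctly.
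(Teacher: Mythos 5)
The paper itself offers no proof of Theorem~\ref{Boll}: it is imported verbatim from Bollob\'as and Fernandez de la Vega \cite{BVega}, so there is no internal argument to compare yours against. Your main line --- configuration model, BFS ball growth from each vertex to radius roughly $\tfrac12\bigl(\log_{k-1}N+\log_{k-1}\log_{k-1}N\bigr)$, then a birthday-paradox sprinkling step joining the two boundaries, with union bounds over vertices and pairs --- is exactly the classical strategy behind the cited result, and the architecture (two-sided exposure, ``balls of size $\sqrt{N\log N}$ must meet'') is sound, including the accounting that the extra $\log_{k-1}\log_{k-1}N$ levels are what make the meeting probability $1-N^{-3}$ rather than $1-o(1)$ per pair.

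There is, however, a genuine quantitative gap exactly where you flag the crux, and it is more than bookkeeping: the claimed per-vertex failure probability $N^{-\omega(1)}$ for $|S_i(u)|\ge\tfrac12(k-1)^i$ is unattainable. A vertex lying on, or within bounded distance of, a short cycle occurs with probability $\Theta(N^{-1})$, two such cycles with probability $\Theta(N^{-2})$, and for $k=3$ the neighborhood of $u$ can even close into a $K_4$ component with probability $\Theta(N^{-2})$, destroying the layer bound outright; no martingale estimate pushes these events below polynomial probability. The workable route is structural: show that whp no vertex has two cycles within distance about $\tfrac14\log_{k-1}N$ (per-vertex probability $O\bigl((k-1)^{4t}/N^2\bigr)$, which survives the union bound), check deterministically that a single nearby cycle costs only a bounded factor, and absorb the $\Theta(\log N)$ later back-edges per ball by noting each destroys only an $N^{-1/4}$-fraction of it. Two smaller points: in the sprinkling step the relevant target is the set of \emph{unmatched} boundary half-edges of $B_r(v)$, not $|B_r(v)|$ (same order, but the interior half-edges are already paired); and the contiguity shortcut does not cover $k=3$, since there $G$ would be a random $2$-regular graph --- disconnected, with about $\log N$ cycle components and linearly growing neighborhoods --- so the hypothesis of Theorem~\ref{Thm4x4} with degree parameter $2$ degenerates; it also needs $N$ even and a correctly quoted superposition-contiguity theorem, so at best it handles $k\ge4$.
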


Hence by reasoning as in the proof of Theorem~\ref{Boll} we obtain the following corollary.

\begin{thm}
Let $G$ be a random $k$--regular graph. Then the maximum vertex flow $M_{v}(N)$ with geodesic routing on $G$ is smaller than
\begin{equation}
k^{C}N \log_{k}^3(N) + o(N\log_{k}^3(N)).
\end{equation}
with probability tending to $1$ as $N\to\infty$.
\end{thm}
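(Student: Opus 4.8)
The plan is to repeat, essentially word for word, the argument used for the previous theorem, substituting the diameter estimate for a random $k$--regular graph (Theorem~\ref{Boll}) for the one used there. A $k$--regular graph has bounded geometry with $\Delta=k$, so Lemma~\ref{LL} applies and gives, for every vertex $v$,
\begin{equation}
T(v)\leq k^{2}(k-1)^{D(G)-2}D(G)^{2},\qquad\text{hence}\qquad M_v(N)\leq k^{2}(k-1)^{D(G)-2}D(G)^{2}.
\end{equation}
Theorem~\ref{Boll} then tells us that, with probability tending to $1$, $D(G)\leq \log_{k-1}(N)+\log_{k-1}(\log_{k-1}(N))+C$ for a constant $C=C(k)$, and it remains to substitute this bound and simplify.

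The single delicate point — and the reason one should \emph{not} copy the cruder bound $M_v\leq k^{D}D^{2}$ used in the proof of the previous theorem — is the base of the logarithm. Here the diameter is controlled in base $k-1$, and if one replaced $(k-1)^{D}$ by $k^{D}$ one would obtain $k^{\log_{k-1}N}=N^{\log_{k-1}k}$, a power of $N$ with exponent strictly larger than $1$, which is far too weak. Keeping the genuine base-$(k-1)$ exponential is precisely what makes the leading term linear in $N$: $(k-1)^{\log_{k-1}N}=N$ exactly, the term $\log_{k-1}(\log_{k-1}(N))$ in the exponent contributes one factor $\log_{k-1}(N)$, the additive constant contributes a factor $(k-1)^{C}$, and the prefactor $D(G)^{2}$ contributes $\big(\log_{k-1}(N)+\cdots\big)^{2}$. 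On the high-probability event where the diameter bound holds, multiplying these together yields
\begin{equation}
M_v(N)\leq k^{2}(k-1)^{C-2}\,N\,\log_{k-1}(N)\,\big(\log_{k-1}(N)+\log_{k-1}(\log_{k-1}(N))+C\big)^{2}.
\end{equation}

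To finish I would change the base of the logarithm, $\log_{k-1}(N)=\big(\ln k/\ln(k-1)\big)\log_{k}(N)$, so that $\log_{k-1}^{3}(N)$ becomes $\big(\ln k/\ln(k-1)\big)^{3}\log_{k}^{3}(N)$, and absorb the constant $k^{2}(k-1)^{C-2}\big(\ln k/\ln(k-1)\big)^{3}$ into a single constant which, as in Theorem~\ref{Boll}, depends only on $k$ and may be written $k^{C}$ after relabeling; the remaining $\log\log$ cross terms are of lower order and go into $o(N\log_{k}^{3}(N))$. This gives the stated bound $M_v(N)\leq k^{C}N\log_{k}^{3}(N)+o(N\log_{k}^{3}(N))$ with probability tending to $1$. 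Since the only external ingredient is Theorem~\ref{Boll}, the main obstacle is not technical but a matter of bookkeeping: one must track the base-$(k-1)$ growth exactly rather than inflating it to base $k$, and one should keep in mind that the argument (and Theorem~\ref{Boll}) requires $k\geq 3$, since for $k=2$ a random $2$--regular graph is a disjoint union of cycles and the claim does not apply.
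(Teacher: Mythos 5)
Your argument is correct and follows essentially the same route as the paper: apply Lemma~\ref{LL} with $\Delta=k$ and substitute the diameter bound of Theorem~\ref{Boll}, which is exactly the one-line reduction the paper gives (``reasoning as in the proof'' of the preceding matching theorem). Your insistence on keeping the genuine $(k-1)^{D}$ growth rather than the cruder $k^{D}$ used there is a sound and in fact necessary piece of bookkeeping to get a bound linear in $N$, so your write-up is, if anything, more careful than the paper's about the base of the logarithm and the relabeling of the constant $k^{C}$.
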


In Figure \ref{bl}, we see the maximum vertex and edge flow for a random regular tree of degree 6 (Bethe Lattice of degree 6) as a function of the number of nodes, and the same after adding one or two random matchings to the edge list of this graph. By adding one random matching to the edge list, we reduce the flow from order $N^2$ to $N\log(N)^2$. 

\begin{figure}[!Ht]
  \begin{center}
  	\includegraphics[width=8cm]{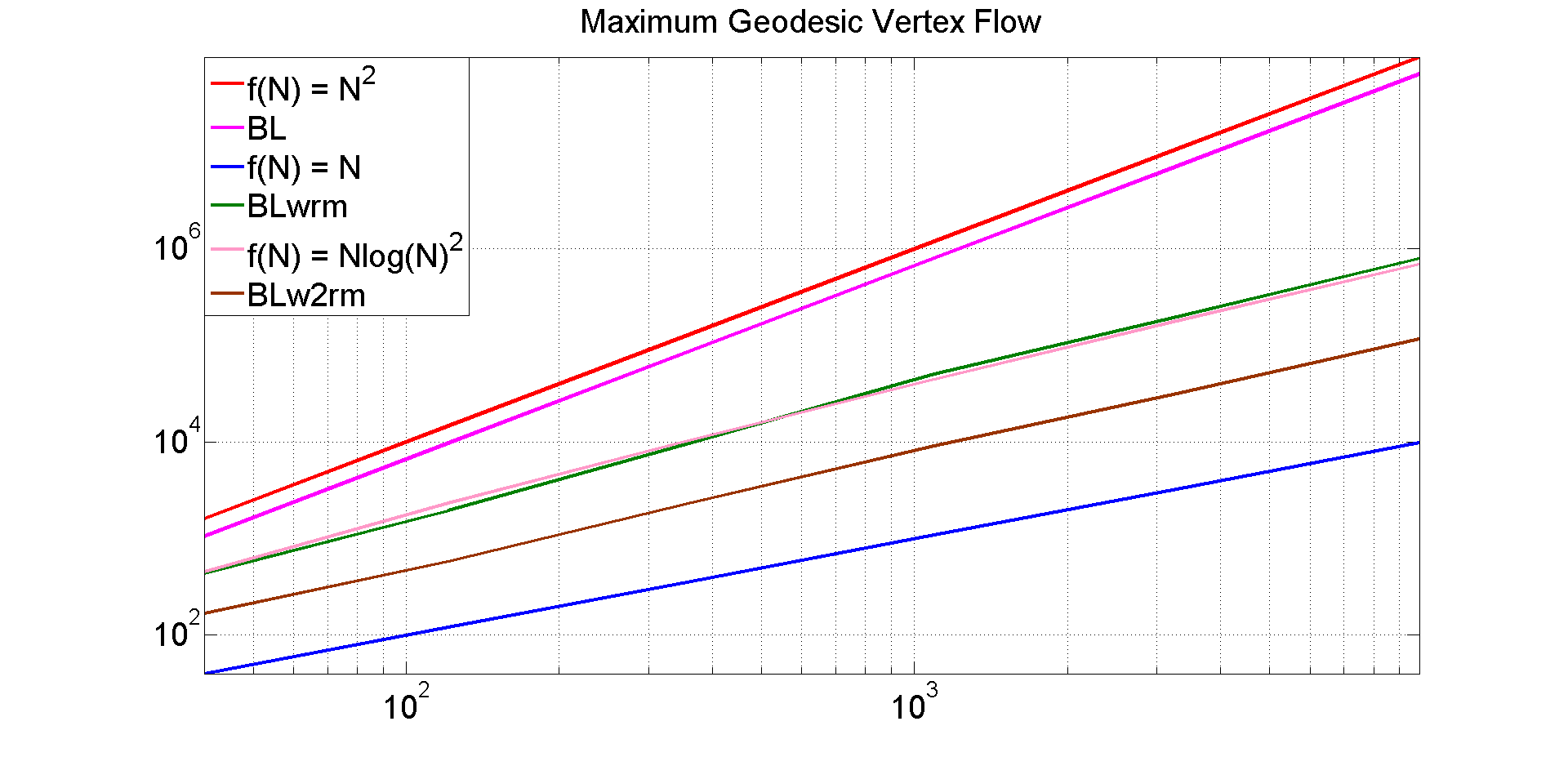}
  	\includegraphics[width=8cm]{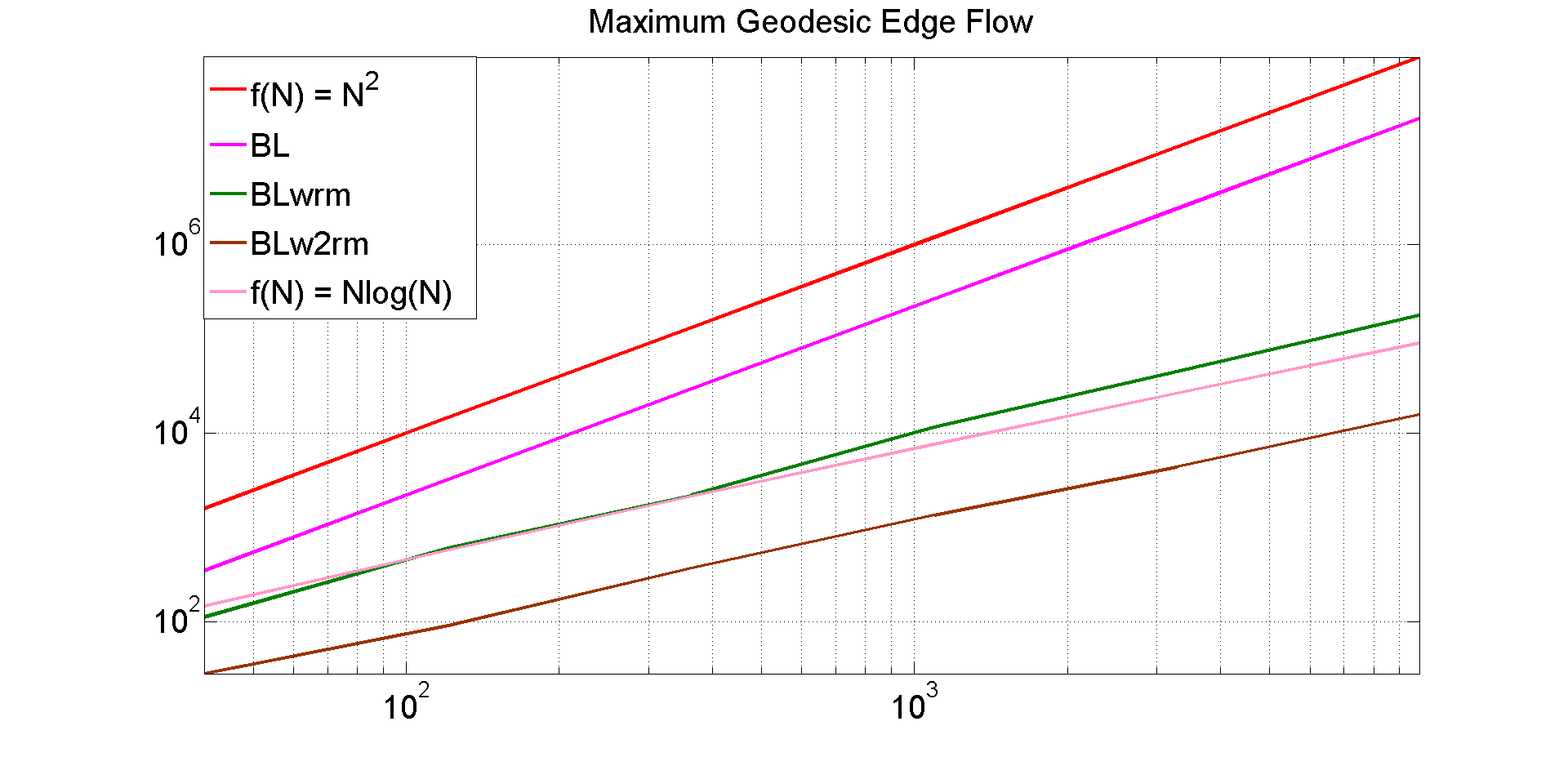}
    \caption{Edge and vertex flow for a Bethe lattice of degree 6 as a function of the number of nodes, and the same after adding one or two random matchings.}
    \label{bl}
  \end{center}
\end{figure}

\section{Implementation and Algorithms}\label{imp_alg}

Section \ref{SPconstruc} explains that we use a Poisson process to generate random
points and remap the distance from the origin via $r\rightarrow\alpha(r)$.
Then we need the Delaunay triangulation with a flow computation based on routing
over geodesic paths.  Thus the main computation can be broken down into four steps:
\begin{enumerate}
\item Generating appropriately distributed pseudorandom vertices,
	\label{STvtxgen}
\item Computing the Delaunay triangulation,
	\label{STdelaun}
\item Finding the shortest path between each pair of points,
	\label{STshpaths}
\item Computing the traffic flow.
	\label{STflow}
\end{enumerate}
The purpose of this section is to describe these steps, consider their run time
and space complexity, and study the efficient implementation of the critical
the ones that consume the most time and space.

Steps \ref{STvtxgen} and \ref{STdelaun} can be used alone to get degree statistics
for graphs that are too big for the traffic flow computation, so it is noteworthy
that Step~\ref{STvtxgen} handles an $n$-vertex graph in time $O(n)$, while
Step~\ref{STdelaun} requires $O(n\log n)$.

Step \ref{STvtxgen} simply selects pseudorandom points $(x_i,y_i)$ uniformly
in a disk, converts to polar coordinates, replaces $r$ by $\alpha(r)$, and converts
back to rectangular coordinates.  If the Poisson rate suggests $n$ points,
we pseudorandomly select a number $N$ from the rate~$n$ Poisson distribution,
and select $N$ points in the disk.
For maximum flexibility, our C$++$ implementation allows the user to enter
the function $\alpha(r)$ in postfix notation using arithmetic operations and
standard transcendental functions.  This adds a constant factor overhead
to the run time for Step~\ref{STvtxgen}, but this is subsumed by Step~\ref{STdelaun}.

The Delaunay triangulation for Step~\ref{STdelaun} is computed via Fortune's
sweep line algorithm running as a separate process \cite{fo:svor}.

Step~\ref{STshpaths} could potentially use a lot of time and space, so efficiency
is important.  The all-points shortest path algorithm just takes the graph's
adjacency matrix~$A$ and computes $A^p$ for a power $p$ not less than the
graph diameter using min-plus arithmetic  For our application, $A$ is very
sparse and $p$ is only $O(\log n)$, so there is no harm in exponentiating
via repeated multiplication by $A$.  (The process stops when such a multiplication
has no effect.)

The matrix for the all-points shortest path problem keeps track of shortest known
path lengths, and sparseness means having a lot of infinite entries.
As suggested by Figure~\ref{Flfacs}, it is important for the min-plus matrix
multiplication to take advantage of sparsity, yet be efficient when the matrix is
dense.  Hence, we store it as a dense (integer) matrix, but also have a bitmap
that keeps track of sparsity.  A big switch statement can consider 8 entries
at a time, and do 0--8 add and minimize operations as needed.  Furthermore,
one 32-bit boolean operation can cause 32 entries to be skipped if the matrix
is very sparse.  It also helps to compute the load factor and switch to dense-matrix
arithmetic when it gets high.

\begin{figure}[htp]
\centerline{\includegraphics{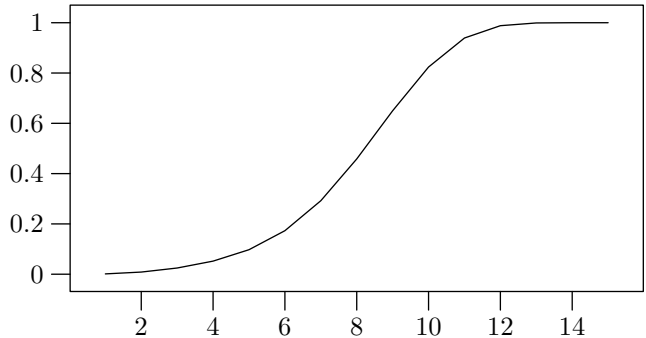}}
\caption{The matrix load factor at each step in the all-points shortest path computation
	for an $n=800$ problem with diameter 14.}
\label{Flfacs}
\end{figure}

Let $D$ be the matrix that gives the path lengths from the all-points shortest path
computation.  The shortest-path routing that forms the basis for Step~\ref{STflow} requires
flow from a vertex~$v$ destined for
a vertex~$w$ to go to neighbors $v_i$ of $v$ for which $D_{v_iw}=D_{vw}-1$.
When there is more than one such neighbor, we divide the flow into equal
parts.  Hence, each edge from $v$ has a sparse vector that tells, for each possible
destination, what fraction of $v$'s outbound traffic takes that edge.
These \emph{redistribution vectors} are sparse, and their non--zero entries are of
the form $1/k$ for small positive integers~$k$.  Table~\ref{Trdistk} tells
how these $k$ values are typically distributed.

\begin{table}[htp]
\caption{Popularity of $k$ values in redistribution vectors}
\label{Trdistk}
\begin{tabular}{|c|c|c|c|c|c|c|c|} \hline
1&     2&     3&     4&      5&      6&       7&      $\ge8$\\ \hline
49\%&  33\%&  12\%&	 3.9\%&  1.0\%&  0.57\%&  0.13\%&  0\%\\ \hline
\end{tabular}
\end{table}

Table~\ref{Trdistk} also provides an estimate of the sparsity in a typical redistribution
vector.  Since $1\cdot0.49+2\cdot0.33+\ldots 7\cdot0.0013=1.76$ and the tabulated
data are for a planar graph with average degree~6, the load factor is $1.76/6=0.293$.
Hence one byte almost always suffices to give a non--zero entry and specify how many
zeros since the last non--zero.

The traffic flow computation for Step~\ref{STflow} begins with a starting flow
matrix $S$ such that each $S_{i,j}$ gives the initial specification for flow
from vertex~$i$ to vertex~$j$.  The object is to compute an overall flow matrix
\begin{equation}
 F = S + \omega S + \omega^2 S + \omega^3 S + \cdots,
\label{EflowF}
\end{equation}
where $\omega$ is a linear operator formed by componentwise multiplication of
columns of a flow matrix times redistribution vectors, e.g., the redistribution
vector $r_{ij}$ for an edge $i,j$ adds $\mathop{\mathrm{diag}}(r_{ij})$ times column~$i$ of
$\omega$'s argument matrix to column~$j$ of $\omega$'s result matrix.
Note that (\ref{EflowF}) specifies a fixed point of the iteration $F\gets S+\omega F$.
This fixed point is reached after $O(\log n)$ steps because
$\omega^p$ is the zero operator whenever exponent $p$ exceeds the graph diameter.

To compute some column~$j$ of $S+\omega F$, we need redistribution vectors $r_{ij}$
for every $i\in E_j$, where $E_j$ is the set of all $i$ such that $i,j$ is in the
edge set~$E$:
$$ S_j + \sum_{i\in E_j} \mathop{\mathrm{diag}}(r_i,j) F_i. $$
This is trivial, except that we can reduce the need for scratch memory by
copying the result back to $F_j$ as soon as the old $F_j$ is no longer needed.

Now consider the total memory requirement for the flow computation on a graph of
$n$~vertices and $m$~edges.  The all-points shortest path matrix~$D$ (including sparsity bits)
requires $(4+\frac{1}{8})n^2$ bytes.  Since $i,j$ and $j,i$ require different
redistribution vectors at about $0.3n$ bytes each, the total memory for redistribution
vectors is about $0.6mn$ bytes.  The starting flow $S$ requires little or no memory.
Finally, $F$ and the
associated scratch memory require between $8n^s$ and $16n^2$ bytes.
For a planar graph with $m\approx 3n$, the total should between $13.8n^2$ and $21.8n^2$,
and experiments have shown that $19n^2$ is typical for such planar graphs.

The run time for the all-points shortest path computation depends on sparsity.
The adjacency matrix $A$ has $2m$ non--zeros, and the load factors in Figure~\ref{Flfacs}
sum to 7.5, a number that does not seem likely to grow much as $n$ increases.
Hence, the all-points shortest path computation requires about $15mn$ add-and-minimize
operations.  Each $F\gets S+\omega F$ computation requires one multiply-add for
each non--zero in the redistribution vectors, for a total of about $0.6mn$.
Finally, the $0.6mn$ multiply-add's are repeated $d+1$ times, where the
graph diameter~$d$ is $O(\log n)$.

\noindent {\it Acknowledgement}. The second author was funded by NIST Grant No. 60NANB10D128.

\end{document}